\newcommand{\beql}[1]{\begin{equation}\label{#1}}
\newcommand{\eeq}{\end{equation}}
\newcommand{\comment}[1]{}
\newcommand{\RR}{{\mathbb R}}
\newcommand{\ZZ}{{\mathbb Z}}
\newcommand{\NN}{{\mathbb N}}
\newcommand{\QQ}{{\mathbb Q}}
\newcommand{\BB}{{\mathcal B}}
\newcommand{\SA}{{\mathcal S}}
\newcommand{\D}{{\Delta}}
\newcommand{\ve}{{\varepsilon}}
\newcounter{open}
\newcounter{dfn}
\def\thedfn{\arabic{dfn}}
\newcounter{obs}
\def\theobs{\arabic{obs}}
\newcounter{thm}
\newcounter{othm}
\def\theothm{\Alph{othm}}
\newcounter{mysec}
\newcounter{mysubsec}[mysec]
\newtheorem{theorem}{Theorem}
\newtheorem{corollary}{Corollary}
\newtheorem{lemma}{Lemma}
\newtheorem{proposition}{Proposition}
\newtheorem{definition}{Definition}
\newtheorem{problem}{Problem}
\theoremstyle{definition}
\newtheorem{example}{Example}
\newtheorem{remark}{Remark}
\newcounter{rem}
\newcounter{rev}
\begin{document}

\title{On uniform asymptotic upper density in locally compact abelian
groups}

\author{Szil\' ard Gy. R\' ev\' esz{\thanks{Supported in part by
the Hungarian National Foundation for Scientific Research, Project
\#s T-49301, K-61908 and K-72731.}}}

\let\oldfootnote\thefootnote
\def\thefootnote{}

\maketitle

{\small \tableofcontents}


\begin{abstract}
Starting out from results known for the most classical cases of
$\NN$, $\ZZ^d$, $\RR^d$ or for $\sigma$-finite abelian groups,
here we define the notion of asymptotic uniform upper density in
general locally compact abelian groups. Even if a bit surprising,
the new notion proves to be the right extension of the classical
cases of $\ZZ^d$, $\RR^d$. The new notion is used to extend some
analogous results previously obtained only for classical cases or
$\sigma$-finite abelian groups. In particular, we show the
following extension of a well-known result for $\ZZ$ of
F\"urstenberg: if in a general locally compact Abelian group $G$ a
set $S\subset G$ has positive uniform asymptotic upper density,
then $S-S$ is syndetic.
\end{abstract}

{\bf MSC 2000 Subject Classification.} Primary 22B05; Secondary
22B99, 05B10.

{\bf Keywords and phrases.} {\it density, asymptotic uniform upper
density, convex body, locally compact abelian groups.}

\newpage

\section{Measuring large, but not necessarily dense infinite
sequences and sets in groups}

Our aim here is to extend the notion of \emph{uniform asymptotic
upper density}, used in case of $\RR$ already by Beurling and
P\'olya in the analysis of entire functions. The same notion is
frequently called by others as \emph{Banach density}, c.f. e.g.
\cite[p. 72]{Füsi}.

The notion of uniform asymptotic upper density -- u.a.u.d. for
short -- is a way to grab the idea of a set being relatively
considerable, even if not necessarily dense or large in some other
more easily accessible sense. In many theorems, in particular in
Fourier analysis and in additive problems where difference sets or
sumsets are considered, the u.a.u.d. is the right notion to
express that a set becomes relevant in the question considered.
However, to date the notions was only extended to sequences and
subsets of the real line, and some immediate relatives like
$\ZZ^d$, $\RR^d$, as well as to finite, or at least finitely
constructed (e.g. $\sigma$-finite) cases.

A framework where the notion might be needed is the generality of
LCA groups. In recent decades it is more and more realized that
many questions e.g. in additive number theory can be investigated,
even sometimes structurally better understood/described, if we
leave e.g. $\ZZ$, and consider the analogous questions in Abelian
groups. In fact, when some analysis, i.e. topology also has a role
-- like in questions of Fourier analysis e.g. -- then the setting
of LCA groups seems to be the natural framework. And inded several
notions and questions, where in classical results u.a.u.d. played
a role, have already been defined, even in some extent discussed
in LCA groups. Nevertheless, it seems that no attempt has been
made to extend the very notion of u.a.u.d. to this setup.

One of the more explicit attempts to really "measure sets in
infinite groups" is perhaps the work of Borovik at al.
\cite{borovikatal:freegroups}, \cite{borovikatal:infinitegroups}.
Other papers, where some ideas close to ours can be seen, are
\cite{Mört} -- considering measures, not sets, although the
investigation there is focused on local structure at small
neighborhoods of points -- and in \cite{nguetseng:meanvalue},
where at least the setup of LCA groups is apparent (although the
interest is quite different).

For cases of $\sigma$-finite groups $G$ it is easy to design the
u.a.u.d., compare \cite{hegyvari:differences}. In the more general
framework of discrete groups, I.Z. Ruzsa \cite{ruzsa:oral} had two
constructions to define u.a.u.d..

However, neither of these constructions were the same as ours.
Below we will explain, how one may construct notions of u.a.u.d.,
which finely extend the classical notion.

\section{Some additive number theory flavored results for difference sets}
\label{sec:additiveresults}

\comment{Stewart and Tijdeman \cite{stewart-tijdeman} observed
that if a sequence $A\subset\NN$ has positive upper density,
meaning that $\overline{d}(A):=\lim\sup_{n\to\infty} A(n)/n > 0$
with $A(n):= {\rm \#} (A\cap [1,n])$, then the iterated difference
sequence $D_k(A)$, defined as $D_0(A)=A$ and
$D_{k+1}(A):=D_k(A)-D_k(A)$ for $k\in\NN$, stabilizes after a
finite number of iterations. The best upper bound for the time
(i.e., the first index $k_0$) of stabilization was estimated by
Tijdeman-Stewart, loc. cit., and improved by Ruzsa
\cite{ruzsa:iterated}.}

Let us denote the upper density of $A\subset \NN$ as
$\overline{d}(A):=\lim\sup_{n\to\infty} A(n)/n > 0$ with $A(n):=
{\rm \#} (A\cap [1,n])$. Erd\H os and S\'ark\"ozy (seemingly
unpublished, but quoted in \cite{hegyvari:differences} and in
\cite{ruzsa:difference}) observed the following.

\begin{proposition}[Erd\H os-S\'ark\"ozi]\label{prop:ErdSar}
If the upper density $\overline{d}(A)$ of a sequence $A\subset
\NN$ is positive, then writing the positive elements of the
sequence $D(A):=D_1(A):=A-A$ as $D(A) \cap \NN
=\{(0<)d_1<d_2<\dots\}$ we have $d_{n+1}-d_n=O(1)$.
\end{proposition}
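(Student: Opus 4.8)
The plan is to argue by contradiction: assume the gaps $d_{n+1}-d_n$ are \emph{not} bounded, which is the same as saying that the complement $\NN\setminus D(A)$ contains arbitrarily long blocks of consecutive integers, and deduce that $\overline{d}(A)=0$, contradicting the hypothesis. (First observe that $\overline{d}(A)>0$ forces $A$, and hence $A-A$, to be infinite, so the enumeration $d_1<d_2<\cdots$ indeed makes sense and the assertion $d_{n+1}-d_n=O(1)$ is meaningful.)

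The engine is the elementary remark that two translates $A+u$ and $A+v$ are disjoint precisely when $|u-v|\notin A-A$, combined with the trivial fact that pairwise disjoint subsets of $\{1,\dots,N\}$ have total cardinality at most $N$. So, if for every $k$ I can produce a $(k+1)$-element set $\{0=s_1<s_2<\cdots<s_{k+1}\}$ all of whose pairwise differences avoid $D(A)$, then $A+s_1,\dots,A+s_{k+1}$ are pairwise disjoint, whence $\sum_{i=1}^{k+1}|(A+s_i)\cap[1,N]|\le N$. Since $|(A+s_i)\cap[1,N]|\ge A(N)-s_{k+1}$, dividing by $N$ and letting $N\to\infty$ along a subsequence realising $\limsup_N A(N)/N=\overline{d}(A)$ yields $(k+1)\,\overline{d}(A)\le 1$. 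As $k$ is arbitrary, $\overline{d}(A)=0$, the desired contradiction.

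Thus everything reduces to constructing an infinite set $S=\{0=s_1<s_2<\cdots\}$ with $(S-S)\cap D(A)=\emptyset$, which I would do greedily. Having chosen $s_1,\dots,s_j$, the new element $n:=s_{j+1}$ must avoid the forbidden set $\bigcup_{i=1}^{j}\bigl(D(A)+s_i\bigr)$, i.e.\ must satisfy $n-s_i\notin D(A)$ for $i=1,\dots,j$. Using that $\NN\setminus D(A)$ contains arbitrarily long intervals, pick $[q+1,q+M]\subseteq\NN\setminus D(A)$ with $M>s_j$ and set $n=q+1+s_j$; then $n>s_j$ and each $n-s_i$ lies in $[q+1,q+M]$, as required. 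This keeps the construction running forever and produces the infinite $S$.

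The main obstacle is exactly this greedy step: one must check that the selection never gets stuck, and here the hypothesis ``$D(A)$ has a gap of length $>L$ for \emph{every} $L$'' is used in full force — a single long gap would not do, since the forbidden set grows with $j$ and later elements $s_{j+1}$ require ever longer gap-intervals to sit in. Everything else — the disjointness bookkeeping, the $o(N)$ losses that arise because we work with ordinary upper density rather than upper Banach density, and the extraction of a suitable $N$ from the $\limsup$-subsequence — is routine.
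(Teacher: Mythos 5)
Your argument is correct, but it reaches the conclusion by a different route than the paper, which never proves the proposition directly: it is derived at the end of Section~\ref{sec:additiveproposgeneralized} as a special case of Theorem~\ref{th:general-hegyvari}, via the chain $\overline{\D}(A)=\overline{D}(A)\geq \overline{d}(A)>0$, the maximality argument producing a finite set $B$ with $(A-A)\cap(B-B)=\{0\}$ and $A-A+B=\ZZ$, and the observation that the gaps $d_{n+1}-d_n$ are then bounded by $\max B$. The common engine is identical --- a set $\{s_1,\dots,s_{k+1}\}$ whose pairwise differences avoid $A-A$ forces the translates $A+s_i$ to be pairwise disjoint, which caps $k+1$ by the reciprocal of the density --- but you run it in the contrapositive: assuming unbounded gaps, you greedily build an \emph{infinite} difference-avoiding set (your choice $s_{j+1}=q+1+s_j$ with $[q+1,q+M]$ a gap of length $M>s_j$ correctly places all new differences $s_{j+1}-s_i$ inside the gap), forcing $\overline{d}(A)=0$. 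What your version buys is self-containedness and the fact that it works directly with the ordinary upper density $\overline{d}$ over the windows $[1,N]$, with the $s_{k+1}/N$ error absorbed in the limit, so no appeal to Banach density or to the LCA machinery is needed; what the paper's version buys is generality (arbitrary LCA groups, syndeticity of $S-S$) and an explicit positive conclusion, namely a concrete finite translation set $B$ with $\#B\le[1/\overline{\D}(A)]$ covering the group, rather than a pure contradiction. One could upgrade your argument to the paper's quantitative form by replacing the infinite greedy construction with a maximal finite difference-avoiding set, which is exactly what the proof of Theorem~\ref{th:general-hegyvari} does.
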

This is analogous, but not contained in the following result of
Hegyv\'ari, obtained for $\sigma$-finite groups. An abelian group
is called $\sigma$-finite (with respect to $H_n$), if there exists
an increasing sequence of {\em finite} subgroups $H_n$ so that
$G=\cup_{n=1}^\infty H_n$. For such a group Hegyv\'ari defines
asymptotic upper density (with respect to $H_n$) of a subset $A
\subset G$ as
\begin{equation}\label{GHdensity}
\overline{d}_{H_n}(A) := \limsup_{n\to\infty} \frac{{\rm \#}
(A\cap H_n)}{{\rm \#} H_n}~.
\end{equation}
Note that for finite groups this is just ${\rm \#} (A\cap G) /
{\rm \#} G$. Hegyv\'ari proves the following \cite[Proposition
1]{hegyvari:differences}.

\begin{proposition}[Hegyv\'ari]\label{thm:hegyvari}
Let $G$ be a $\sigma$-finite abelian group with respect to the
increasing, exhausting sequence $H_n$ of finite subgroups and let
$A\subset G$ have positive upper density with respect to $H_n$.
Then there exists a finite subset $B\subset G$ so that $A-A+B=G$.
Moreover, we have ${\rm \#}B\le 1/\overline{d}_{H_n}(A)$.
\end{proposition}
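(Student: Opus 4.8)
The plan is to run a greedy covering argument directly inside $G$, using the finite subgroups $H_n$ only as measuring sticks at the counting step. Write $\delta:=\overline{d}_{H_n}(A)>0$; since $\delta>0$ the set $A$ is nonempty, so $0\in D$, where $D:=A-A$, and $D$ is symmetric. I would construct elements $b_1,b_2,\dots$ of $G$ inductively: set $b_1:=0$; having chosen $b_1,\dots,b_i$, if $D+\{b_1,\dots,b_i\}=G$ then stop and put $B:=\{b_1,\dots,b_i\}$, and otherwise choose any $b_{i+1}\in G\setminus\bigl(D+\{b_1,\dots,b_i\}\bigr)$.

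The elementary point behind the construction is that at every stage the translates $A+b_1,\dots,A+b_i$ are pairwise disjoint. Indeed, $(A+b_p)\cap(A+b_q)\neq\emptyset$ holds if and only if $b_p-b_q\in A-A=D$, i.e.\ if and only if $b_p\in D+b_q$; since $b_{i+1}$ was chosen to lie outside $D+b_j$ for every $j\le i$, the new translate $A+b_{i+1}$ is disjoint from all the earlier ones, and disjointness of the whole family follows by induction.

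The heart of the argument is the counting step, which also delivers the sharp bound. Suppose the construction has produced $b_1,\dots,b_t$, and fix any $\delta'<\delta$. Because $\limsup_n \#(A\cap H_n)/\#H_n=\delta$ and the subgroups $H_n$ increase with union $G$, one can find an index $n$ with both $\#(A\cap H_n)\ge\delta'\,\#H_n$ and $b_1,\dots,b_t\in H_n$. Since each $b_j$ lies in the \emph{subgroup} $H_n$, we have $(A+b_j)\cap H_n=(A\cap H_n)+b_j$, a subset of $H_n$ of cardinality $\#(A\cap H_n)$; and these $t$ subsets of $H_n$ are pairwise disjoint, being subsets of the pairwise disjoint sets $A+b_j$. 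Hence $t\,\delta'\,\#H_n\le t\,\#(A\cap H_n)\le\#H_n$, so $t\le 1/\delta'$, and letting $\delta'\uparrow\delta$ gives $t\le 1/\delta$. In particular the construction cannot continue indefinitely, so it terminates via the stopping clause, producing a finite $B$ with $A-A+B=D+B=G$ and $\#B\le 1/\delta=1/\overline{d}_{H_n}(A)$.

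I expect the only real obstacle to be conceptual rather than computational. The translates $A+b_j$ are typically infinite, so they cannot be counted inside $G$ directly, and a naive attempt to take a limit of solutions $B_k\subseteq H_{n_k}$ of the corresponding finite problems is awkward because the offsets may run off to infinity. The device that makes everything work is that the greedily chosen $b_j$ all sit in a single finite subgroup $H_n$, so intersecting with $H_n$ turns $A+b_j$ into an honest translate of $A\cap H_n$ inside $H_n$; this is precisely where the subgroup structure of a $\sigma$-finite group gets used. The remaining ingredients — the disjointness observation and the greedy step — are the standard Ruzsa/F\"urstenberg covering argument, and the only routine check is that the stopping index is well defined, which is immediate from $t\le 1/\delta<\infty$.
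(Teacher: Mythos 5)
Your proof is correct. The paper itself never proves Proposition~\ref{thm:hegyvari} directly --- it quotes it from Hegyv\'ari and later subsumes it under Theorem~\ref{th:general-hegyvari} --- so the fair comparison is with the proof of that theorem, which shares your skeleton: pairwise disjoint translates $A+b_1,\dots,A+b_k$ coming from a packing condition $(A-A)\cap(B-B)=\{0\}$, a counting step bounding $k$, and maximality (your stopping clause) forcing $A-A+B=G$. The genuine difference is the counting step. The paper works with the \emph{uniform} density $\overline{\D}(A)$: it feeds the finite set $L=\{b_1,\dots,b_k\}$ itself into the infimum over translation sets in \eqref{FAdensity}, extracts $V,x$ with $\mu(A\cap(V+x))>\tau\,\mu(V+L)$, and uses $V+L\supset\bigcup_{j}\bigl(((V+x)\cap A)+b_j\bigr)-x$ to get $\mu(V+L)\ge k\,\mu((V+x)\cap A)$; this needs no subgroup structure and so works in an arbitrary LCA group, but it genuinely requires the translation-uniform density. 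You instead exploit the $\sigma$-finite structure: putting all the $b_j$ into a single finite subgroup $H_n$ on which $A$ is $\delta'$-dense turns each $A+b_j$ into an honest translate of $A\cap H_n$ inside $H_n$ (your identity $(A+b_j)\cap H_n=(A\cap H_n)+b_j$ is where the subgroup property is used), so the ordinary, possibly non-uniform density $\overline{d}_{H_n}$ suffices. That matches the hypothesis of the proposition exactly and is a weaker assumption than positivity of the uniform density (compare \eqref{GHUdensity} and Example~\ref{ex:1}), at the price of being confined to the $\sigma$-finite setting. All the individual steps check out, including the simultaneous choice of $n$ realizing the $\limsup$ and containing $b_1,\dots,b_t$, and the passage $\delta'\uparrow\delta$ giving $\#B\le 1/\overline{d}_{H_n}(A)$.
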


F\"urstenberg calls a subset $S\subset G$ in a topological Abelian
(semi)group a {\em syndetic} set, if there exists a compact set
$K\subset G$ such that for each element $g\in G$ there exists a
$k\in K$ with $gk\in S$; in other words, in topological groups
$\cup_{k\in K} Sk^{-1}=G$. Then he presents as Proposition 3.19
(a) of \cite{Füsi} the following.

\begin{proposition}[F\"urstenberg]\label{prop:Füst} Let $S\subset
\ZZ$ with positive upper Banach density. Then $S-S$ is a syndetic
set.
\end{proposition}

In the following we extend the notion of uniform asymptotic upper
density, (also called as Banach density) to arbitrary LCA groups,
and present various generalized versions of the above results,
which cover all of them.

In fact, our interest in the problem of the definition of u.a.u.d.
in general LCA groups came from another problem, the so-called
Tur\'an extremal problem for positive definite functions. In that
question some results, already known for classical situations like
$\RR^d$, $\ZZ^d$ or compact groups, can also be extended. We
discuss these questions in \cite{R}.

\section{Various forms of the asymptotic density}

\noindent We start with the frequently used definition of
asymptotic upper density in $\RR^d$. Let $K\subset\RR^d$ be a
\emph{fat body}, i.e. a set with $0\in {\rm int} K$,
$K=\overline{\rm int K}$ and $K$ compact. Then asymptotic upper
density with respect to $K$ is defined as
\begin{equation}\label{Rdensity}
\overline{d}_{K}(A) := \limsup_{r\to\infty} \frac{|A\cap
rK|}{|rK|}~.
\end{equation}
The definition \eqref{GHdensity} is clearly analogous to
\eqref{Rdensity}. As is easy to see, both \eqref{Rdensity} and
\eqref{GHdensity} depends on the choice of the fundamental set $K$
or sequence $H_n$, even if {\em positivity} of \eqref{Rdensity} is
invariant for a large class of underlying sets including all
convex, but also many other bodies. The similar notion of density
applies and has the same properties also for the discrete group
$\ZZ^d$. On the other hand, for a given subset $A$ in a
$\sigma$-finite group $G$, \eqref{GHdensity} can easily be zero
for some fundamental sequence $H_n$, while being maximal (i.e., 1)
for some other choice $H'_n$ of fundamental sequence.

\begin{example}\label{ex:1}
Let $G:=\QQ/\ZZ$, which is a $\sigma$-finite additive abelian
group. Let $H_n:=\{r\in G~:~ r=\frac pq ,~ q\le n \}$; then $H_n$
is an increasing and exhausting sequence of finite subgroups of
$G$. Note that ${\rm \#}H_n=\sum_{j\le n}\varphi(j)\sim
\frac{6}{\pi^2} n^2$. Let then $A_k:= \{r\in G~:~ r=\frac pq ,~
(p,q)=1,~ (k^2+k)!< q \le (k+1)^2! \}$ and $A:=\cup_{k=1}^\infty
A_k$. Then it is not hard to prove that $\liminf_{n\to\infty}
\frac{{\rm \#} A\cap H_n}{{\rm \#} H_n}=0$ but
$\limsup_{n\to\infty} \frac{{\rm \#} A\cap H_n}{{\rm \#} H_n}=1$.
Then it is clear that the value of the upper density can be either
0 or 1 depending on the choice of an appropriate subsequence of
$H_n$ as fundamental sequence. With a little modification an
example with arbitrary numbers as possible upper densities can be
derived.
\end{example}

However, results corresponding to the above ones of Erd\H os,
S\'ark\"ozi and Hegyv\'ari are easily sharpened by using only a
weaker notion, that of \emph{asymptotic uniform upper density}. It
could be defined as

\begin{equation}\label{GHUdensity}
\overline{D}_{H_n}(A) := \limsup_{n\to\infty} \frac{\sup_{x\in G}
{\rm \#} A\cap (H_n+x)}{{\rm \#} H_n}
\end{equation}
for $\sigma$-finite abelian groups and is defined as
\begin{equation}\label{RUdensity}
\overline{D}_{K}(A) := \limsup_{r\to\infty} \frac{\sup_{x\in\RR^d}
|A\cap (rK+x)|}{|rK|}~.
\end{equation}
in $\RR^d$. It is obvious that these notions are translation
invariant, and  $\overline{D}_{H_n}(A) \ge \overline{d}_{H_n}(A)$,
$\overline{D}_{K}(A) \ge \overline{d}_{K}(A)$. It is also
well-known, that $\overline{D}_{K}(A)$ gives the same value for
all nice - e.g. for all convex - bodies $K\subset \RR^d$, although
this fact does not seem immediate from the formulation. Actually,
we will obtain this as a side result, being an immediate corollary
of Theorem \ref{th:Requivalence}, see Remark \ref{c:easy}.

Similar definitions can be used for $\ZZ^d$. However, dependence
on the fundamental sequence $H_n$ makes the $\sigma$-finite case
less appealing, and we lack a successful notion for abelian groups
in general. In particular, a natural requirement is to find a
common generalization of asymptotic upper density, which works
both for $\RR^d$ and $\ZZ^d$, and also for a larger class of (say,
abelian) groups, including, but not restricted to $\sigma$-finite
ones.

Note also the following ambiguity in the use of densities in
literature. Sometimes even in continuous groups a discrete set
$\Lambda$ is considered in place of $A$, and then the definition
of the asymptotic upper density is
\begin{equation}\label{RUNdensity}
\overline{D}^{\#}_{K}(A) := \limsup_{r\to\infty}
\frac{\sup_{x\in\RR^d} {\rm \#} \Lambda \cap (rK+x)}{|rK|}~.
\end{equation}

That motivates our further extension: we are aiming at asymptotic
uniform upper densities of {\em measures}, say measure $\nu$ with
respect to measure $\mu$, (whether related by $\nu$ being the trace of $\mu$ on a set or not). 
E.g. in \eqref{RUNdensity} $\nu:={\rm \#}$ is the cardinality or
counting measure of a set $\Lambda$, while $\mu:=|\cdot|$ is just
the volume. The general formulation in $\RR^d$ is thus
\begin{equation}\label{Rnu-density}
\overline{D}_{K}(\nu):= \limsup_{r\to\infty}
\frac{\sup_{x\in\RR^d}\nu(rK+x)}{|rK|}~.
\end{equation}

Of course, to extend these notions some natural hypotheses should
apply. We are considering abelian groups (although non-abelian
groups come to mind naturally, here we do not consider this
extension), and in accordance to the group settings only densities
with respect to translation-invariant measures $\mu$ are suitable.
Otherwise we want $\nu$ to be a measure, possibly infinite, and
$\mu$ be another, translation-invariant, nonnegative (outer)
measure with strictly positive, but finite values when applied to
sets considered.

We will consider two generalizations here. The first applies for
the class of abelian groups $G$, equipped with a topological
structure which makes $G$ a LCA (locally compact abelian) group.
Considering such groups are natural for they have an essentially
unique translation invariant Haar measure $\mu_G$ (see e.g.
\cite{rudin:groups}), what we fix to be our $\mu$. By
construction, $\mu$ is a Borel measure, and the sigma algebra of
$\mu$-measurable sets is just the sigma algebra of Borel mesurable
sets, denoted by $\BB$ throughout. Furthermore, we will take
$\BB_0$ to be the members of $\BB$ with compact closure: note that
such Borel measurable sets necessarily have finite Haar measure.
This will be important for not allowing a certain degeneration of
the notion: e.g. if we consider $G=\RR$, $\nu$ is the counting
measure $\#$ and $A$ is some sequence $A=\{a_k~:~k\in \NN\}$, say
tending to infinity, then it is easy to define a (non-compact, but
still measurable) union $V$ of decreasingly small neighborhoods of
the points $a_k$ such that the Haar measure of $V$ does not exceed
1, but all of $A$ stays in $V$, hence the relative density of $A$,
with respect to the counting measure, is infinite. (Another way to
deal with this phenomenon would have been to fix that
$\infty/\infty=0$, but we prefer not to go into such questions.)

Note if we consider the discrete topological structure on any
abelian group $G$, it makes $G$ a LCA group with Haar measure
$\mu_G={\rm \#}$, the counting measure. Therefore, our notions
below certainly cover all discrete groups. This is the natural
structure for $\ZZ^d$, e.g. On the other hand all $\sigma$-finite
groups admit the same structure as well, unifying considerations.
(Note that $\ZZ^d$ is not a $\sigma$-finite group since it is {\em
torsion-free}, i.e. has no finite subgroups.)

The other measure $\nu$ can be defined, e.g., as the {\em trace}
of $\mu$ on the given set $A$, that is,
$\nu(H):=\nu_A(H):=\mu_G(H\cap A)$, or can be taken as the
counting measure of the points included in some set $\Lambda$
derived from the cardinality measure similarly:
$\gamma(H):=\gamma_{\Lambda}(H):={\rm \#} (H\cap \Lambda)$.

\begin{definition}\label{compactdensity}
Let $G$ be a LCA group and $\mu:=\mu_G$ be its Haar measure. If
$\nu$ is another measure on $G$ with the sigma algebra of
measurable sets being ${\mathcal S}$, then we define
\begin{equation}\label{Cnudensity}
\overline{D}(\nu;\mu) := \inf_{C\Subset G} \sup_{V\in {\mathcal S}
\cap \BB_0} \frac{\nu(V)}{\mu(C+V)}~.
\end{equation}
In particular, if $A\subset G$ is Borel measurable and $\nu=\mu_A$
is the trace of the Haar measure on the set $A$, then we get
\begin{equation}\label{CAdensity}
\overline{D}(A) :=\overline{D}(\nu_A;\mu) := \inf_{C\Subset G}
\sup_{V\in {\mathcal B}_0} \frac{\mu(A\cap V)}{\mu(C+V)}~.
\end{equation}
If $\Lambda\subset G$ is any (e.g. discrete) set and $\gamma
:=\gamma_\Lambda:=\sum_{\lambda\in\Lambda} \delta_{\lambda}$ is
the counting measure of $\Lambda$, then we get
\begin{equation}\label{CLdensity}
\overline{D}^{\#}(\Lambda) :=\overline{D}(\gamma_{\Lambda};\mu) :=
\inf_{C\Subset G} \sup_{V\in \BB_0} \frac{{\rm \# }(\Lambda\cap
V)}{\mu(C+V)}~.
\end{equation}
\end{definition}

\begin{theorem}\label{th:Requivalence}
Let $K$ be any convex
body in $\RR^d$ and normalize the Haar measure of $\RR^d$ to be
equal to the volume $|\cdot|$. Let $\nu$ be any measure with sigma
algebra of measurable sets $\mathcal S$. Then we have
\begin{equation}\label{Rd-equivalance}
\overline{D}(\nu;|\cdot|) = \overline{D}_K(\nu)~.
\end{equation}
The same statement applies also to $\ZZ^d$.
\end{theorem}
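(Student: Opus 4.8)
The plan is to prove the two inequalities $\overline{D}(\nu;|\cdot|)\le\overline{D}_K(\nu)$ and $\overline{D}(\nu;|\cdot|)\ge\overline{D}_K(\nu)$ separately: the first via an averaging (Fubini--Tonelli) identity that hands us the right compact set $C$, the second by testing the infimum-of-suprema against dilated translates of $K$. Throughout I would assume that the $\sigma$-algebra ${\mathcal S}$ contains the Borel sets --- this is implicit already in \eqref{Rnu-density} and holds in every instance of Definition~\ref{compactdensity} --- so that $\BB_0\subset{\mathcal S}$. After disposing of the trivial case in which $\nu(V_0)=+\infty$ for some $V_0\in\BB_0$ (then both densities are $+\infty$: the right one because $V_0\subset r_0K$ for some $r_0$ forces $\nu(rK)=\infty$ for all $r\ge r_0$, the left one by testing against a ball containing $V_0$), I may assume $\nu$ is finite on $\BB_0$; since $\RR^d=\bigcup_{n\in\NN}nK$ (as $0\in{\rm int}\,K$), $\nu$ is then $\sigma$-finite on $\RR^d$, which is all the Fubini step needs.

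For $\overline{D}(\nu;|\cdot|)\le\overline{D}_K(\nu)=:D$ --- vacuous if $D=\infty$, so assume $D<\infty$ --- I would fix $\ve>0$ and use the definition of $D$ as a $\limsup$ to find $R_0>0$ with $\sup_x\nu(rK+x)\le(D+\ve)|rK|$ for all $r\ge R_0$. The key is the identity
\[
|rK|\,\nu(V)=\int_{\RR^d}\nu\bigl(V\cap(rK+y)\bigr)\,dy\qquad(V\in\BB_0),
\]
which follows from Tonelli's theorem together with $\int_{\RR^d}\one_{rK}(z-y)\,dy=|rK|$ for every $z$. The integrand is $\le\sup_x\nu(rK+x)$ and vanishes off the bounded set $\{\,y:(rK+y)\cap V\ne\emptyset\,\}=V-rK$, so $\nu(V)\le(D+\ve)\,|V-rK|$ for every $r\ge R_0$. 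Taking $r=R_0$ and the compact set $C:=-R_0K$, for which $\mu(C+V)=|V-R_0K|$, this gives $\nu(V)\le(D+\ve)\,\mu(C+V)$ for all $V\in\BB_0\cap{\mathcal S}$, hence $\overline{D}(\nu;|\cdot|)\le D+\ve$; letting $\ve\to0$ completes this direction.

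For the reverse inequality I would show $\sup_{V\in\BB_0\cap{\mathcal S}}\nu(V)/\mu(C+V)\ge D$ for every nonempty compact $C$ (the empty $C$ giving $+\infty$ trivially). Fix such a $C$ and $\ve>0$; choose $r_n\to\infty$ along which $\sup_x\nu(r_nK+x)/|r_nK|\to D$ (or $\to\infty$ if $D=\infty$), and then $x_n$ with $\nu(r_nK+x_n)\ge(D-\ve)|r_nK|$ (resp. $\ge n|r_nK|$) for all large $n$. Each $V_n:=r_nK+x_n$ is compact, hence lies in $\BB_0\cap{\mathcal S}$, and $\mu(C+V_n)=|C+r_nK|$. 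Since $C$ is compact and $0\in{\rm int}\,K$ we have $C\subset\rho K$ for some $\rho>0$, so convexity yields $C+r_nK\subset(\rho+r_n)K$ and therefore
\[
|r_nK|\ \le\ |C+r_nK|\ \le\ (\rho+r_n)^d|K|\ =\ \Bigl(1+\tfrac{\rho}{r_n}\Bigr)^{\!d}\,|r_nK|,
\]
so $|r_nK|/|C+r_nK|\to1$. Hence $\sup_V\nu(V)/\mu(C+V)\ge\limsup_n\nu(V_n)/|C+r_nK|\ge D-\ve$ (the case $D=\infty$ being immediate), and $\ve\to0$ followed by $\inf_C$ gives the claim; combined with the previous step this proves \eqref{Rd-equivalance}.

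For $\ZZ^d$ I expect the same argument to go through after replacing $rK$ by $rK\cap\ZZ^d$, the integral over $\RR^d$ by a sum over $\ZZ^d$, and the volume $|rK|$ --- both in the denominator of $\overline{D}_K$ and in the size of sumsets --- by the lattice-point count $\#(rK\cap\ZZ^d)$; the only extra ingredient is the classical asymptotics $\#(rK\cap\ZZ^d)=|rK|+O(r^{d-1})=(1+o(1))|rK|$ for a convex body $K$, which is exactly what keeps the ratio in the second step tending to $1$. I expect the main obstacle to be not a conceptual one but the measurability and $\sigma$-finiteness bookkeeping around the Fubini identity --- joint measurability of $(y,z)\mapsto\one_{rK}(z-y)$, $\nu$-measurability of $V\cap(rK+y)$, and the legitimacy of Tonelli for a possibly infinite $\nu$ --- which is why I reduce to $\sigma$-finite $\nu$ and assume $\BB\subset{\mathcal S}$ at the outset. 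One could avoid Tonelli by bounding $\nu(V)$ through a finite cover of $V$ by translates of $rK$, but that would need a Vitali-type covering estimate for the possibly non-symmetric body $K$, which is more cumbersome.
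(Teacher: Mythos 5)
Your proof is correct and follows essentially the same two-pronged strategy as the paper: the inequality $\overline{D}(\nu;|\cdot|)\ge\overline{D}_K(\nu)$ by testing the supremum against the sets $V_n=r_nK+x_n$ together with $C+r_nK\subset(\rho+r_n)K$ and the volume ratio tending to $1$, and the reverse inequality via the Tonelli identity $\int\nu(V\cap(rK+y))\,dy=|rK|\,\nu(V)$, which is exactly the paper's convolution/averaging step in contrapositive form. A minor bonus of your formulation is that by taking $C:=-R_0K$ you avoid the paper's unjustified ``$C=-C$'' step, and you handle the measurability, $\sigma$-finiteness and infinite-value edge cases that the paper passes over in silence.
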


\begin{remark}\label{c:easy}
In particular, we find that the asymptotic uniform upper density
$\overline{D}_K(\nu)$ does not depend on the choice of $K$. For a
direct proof of this one has to cover the boundary of a large
homothetic copy of $K$ by standard (unit) cubes, say, and after a
tedious $\epsilon$-calculus a limiting process yields the result.
However, Theorem 1 elegantly overcomes these technical
difficulties.
\end{remark}

Furthermore, we also introduce a second notion of density as
follows.

\begin{definition}\label{finitedensity}
Let $G$ be a LCA group and $\mu:=\mu_G$ be its Haar measure. If
$\nu$ is another measure on $G$ with the sigma algebra of
measurable sets being ${\mathcal S}$, then we define
\begin{equation}\label{Fnudensity}
\overline{\D}(\nu;\mu) := \inf_{F\subset G,\,{\rm \#} F<\infty }
\sup_{V\in {\mathcal S} \cap \BB_0} \frac{\nu(V)}{\mu(F+V)}~.
\end{equation}
In particular, if $A\subset G$ is Borel measurable and $\nu=\mu_A$
is the trace of the Haar measure on the set $A$, then we get
\begin{equation}\label{FAdensity}
\overline{\D}(A) :=\overline{\Delta}(\nu_A;\mu) := \inf_{F\subset
G,\, {\rm \#} F<\infty } \sup_{V\in \BB_0} \frac{\mu(A\cap
V)}{\mu(F+V)}~.
\end{equation}
If $\Lambda\subset G$ is any (e.g. discrete) set and $\gamma
:=\gamma_\Lambda:=\sum_{\lambda\in\Lambda} \delta_{\lambda}$ is
the counting measure of $\Lambda$, then we get
\begin{equation}\label{FLdensity}
\overline{\D}^{\rm \#}(\Lambda)
:=\overline{\Delta}(\gamma_{\Lambda};\mu) := \inf_{F\subset G,\,
{\rm \# } F< \infty} \sup_{V\in \BB_0} \frac{{\rm \# }(\Lambda\cap
V)}{\mu(F+V)}~.
\end{equation}
\end{definition}

The two definitions are rather similar, except that the
requirements for $\overline{\D}$ refer to finite sets only.
Because all finite sets are necessarily compact in an LCA group,
\eqref{Cnudensity} of Definition \ref{compactdensity} extends the
same infimum over a wider family of sets than \eqref{Fnudensity}
of Definition \ref{finitedensity}; therefore we get

\begin{proposition}\label{prop:densitycompari}
Let $G$ be any LCA group, with normalized Haar measure $\mu$. Let
$\nu$ be any measure with sigma algebra of measurable sets
$\mathcal S$. Then we have
\begin{equation}\label{Fd-equivalance-gen}
\overline{\D}(\nu;\mu) \ge \overline{D}(\nu;\mu)~.
\end{equation}
\end{proposition}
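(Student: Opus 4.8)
The plan is immediate from the shapes of \eqref{Cnudensity} and \eqref{Fnudensity}: the two definitions differ only in the range of the outer infimum --- over finite subsets $F\subset G$ for $\overline{\D}(\nu;\mu)$, and over relatively compact subsets $C$ for $\overline{D}(\nu;\mu)$ --- while the inner supremum $\sup_{V\in{\mathcal S}\cap\BB_0}\nu(V)/\mu(\,\cdot\,+V)$ is the very same expression in both. So the first (and only substantive) observation is that every finite subset of $G$ is compact, hence relatively compact: a finite set is a finite union of singletons, and singletons are compact in any topological space. Thus the family of finite sets over which $\overline{\D}$ takes its infimum is a subfamily of the family of (relatively) compact sets over which $\overline{D}$ takes its infimum.

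Granting this, the rest is just monotonicity of the infimum. Fix an arbitrary finite $F\subset G$. Since $F$ is then an admissible choice of $C$ in \eqref{Cnudensity}, and $C+V=F+V$ for that choice, one gets
\[
\overline{D}(\nu;\mu)=\inf_{C\Subset G}\ \sup_{V\in{\mathcal S}\cap\BB_0}\frac{\nu(V)}{\mu(C+V)}\ \le\ \sup_{V\in{\mathcal S}\cap\BB_0}\frac{\nu(V)}{\mu(F+V)}.
\]
As $F$ ranges over all finite subsets of $G$, taking the infimum of the right-hand side over these $F$ produces exactly $\overline{\D}(\nu;\mu)$, whence $\overline{D}(\nu;\mu)\le\overline{\D}(\nu;\mu)$, i.e.\ \eqref{Fd-equivalance-gen}.

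There is no real obstacle here; the proposition is recorded mainly to pin down the relation between the two densities, and the argument is simply ``an infimum over a larger index set is at most an infimum over a smaller one.'' The one place deserving a line of care is that the ratios are genuinely the same objects in both definitions --- in particular that $F+V\in\BB_0$ whenever $F$ is finite and $V\in\BB_0$, since a finite union of translates of a relatively compact Borel set is again a relatively compact Borel set, so $\mu(F+V)$ is finite and the quotient is meaningful under whatever convention (e.g.\ for indeterminate forms) is adopted uniformly for both $\overline{D}$ and $\overline{\D}$.
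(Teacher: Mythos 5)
Your argument is correct and is exactly the paper's: the paper likewise observes that every finite set is compact in an LCA group, so the infimum defining $\overline{D}(\nu;\mu)$ ranges over a larger family than that defining $\overline{\D}(\nu;\mu)$, and monotonicity of the infimum gives \eqref{Fd-equivalance-gen}. Your added remark that $F+V$ remains a relatively compact Borel set is a harmless extra check not spelled out in the paper.
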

This specializes to $\RR^d$ as follows.
\begin{proposition}\label{th:comparionRd}
Let us normalize the Haar measure of $\RR^d$ to be equal to the
volume $|\cdot|$. Let $\nu$ be any measure with sigma algebra of
measurable sets $\mathcal S$. Then we have
\begin{equation}\label{Fd-equivalance}
\overline{\D}(\nu;|\cdot|) \ge \overline{D}(\nu;|\cdot|)~.
\end{equation}
\end{proposition}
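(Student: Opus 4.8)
The plan is to recognize this statement as nothing more than the $\RR^d$-instance of Proposition~\ref{prop:densitycompari}, which has already been established in full generality. First I would record the one standard fact that makes the specialization legitimate: $\RR^d$, equipped with its Euclidean topology, is a locally compact abelian group, and its Haar measure is unique up to a positive multiplicative constant (see e.g.~\cite{rudin:groups}); fixing the normalization so that, say, the unit cube has measure $1$ identifies this Haar measure with Lebesgue measure $|\cdot|$. Thus the pair $(\RR^d,|\cdot|)$ satisfies the hypotheses of Proposition~\ref{prop:densitycompari}, and invoking that proposition with $G=\RR^d$ and $\mu=|\cdot|$ gives \eqref{Fd-equivalance} immediately.

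For completeness I would also restate the short reason behind the inequality in this concrete setting. In \eqref{Fnudensity} the outer infimum runs over finite sets $F\subset\RR^d$, whereas in \eqref{Cnudensity} it runs over all relatively compact sets $C\Subset\RR^d$. Since every finite subset of $\RR^d$ is compact (being closed and bounded), the index family appearing in \eqref{Cnudensity} contains the one appearing in \eqref{Fnudensity}, while the functional $\sup_{V\in\SA\cap\BB_0}\nu(V)/\mu(C+V)$ being minimized is literally the same expression in both cases. Passing to an infimum over a larger index set can only lower (or leave unchanged) the value, so $\overline{D}(\nu;|\cdot|)\le\overline{\D}(\nu;|\cdot|)$, which is exactly \eqref{Fd-equivalance}.

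There is essentially no obstacle to overcome: all the content sits in Proposition~\ref{prop:densitycompari}, and the only point deserving a moment's attention is the normalization convention that ties the abstract Haar measure of $\RR^d$ to Lebesgue measure $|\cdot|$ — a routine fact. Accordingly I would keep the proof to a single short paragraph, citing Proposition~\ref{prop:densitycompari} and noting that finite subsets of $\RR^d$ are compact.
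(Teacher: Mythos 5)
Your proposal is correct and matches the paper exactly: Proposition~\ref{th:comparionRd} is presented there as the immediate $\RR^d$-specialization of Proposition~\ref{prop:densitycompari}, which in turn rests on the same observation you give, namely that finite sets are compact, so the infimum in \eqref{Cnudensity} runs over a larger family than that in \eqref{Fnudensity} and hence can only be smaller. Nothing further is needed.
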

Moreover, the following is obvious, since in discrete groups the
Haar measure is the counting measure and the compact sets are
exactly the finite sets.
\begin{proposition}\label{prop:compactfinite} Let $\nu$ be any
measure on the sigma algebra $\SA$ of measurable sets in a
discrete Abelian group $G$. Take $\mu:=\#$ the counting measure,
which is the normalized Haar measure of $G$ as a LCA group. Then
\begin{equation}\label{finitecompact-equality}
\overline{\D}(\nu;\#) = \overline{D}(\nu;\#)~.
\end{equation}
\end{proposition}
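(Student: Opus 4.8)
The plan is to observe that the two density notions $\overline{D}(\nu;\#)$ and $\overline{\D}(\nu;\#)$ differ only in the family of sets indexing the outer infimum: compact sets $C\Subset G$ in \eqref{Cnudensity}, and finite sets $F\subset G$ in \eqref{Fnudensity}, while the inner functional $\sup_{V\in\SA\cap\BB_0}\nu(V)/\mu(C+V)$ (resp. with $F$) is literally the same expression. Since every finite set is compact in any LCA group, Proposition \ref{prop:densitycompari} already gives $\overline{\D}(\nu;\#)\ge\overline{D}(\nu;\#)$. It therefore suffices to establish the reverse inequality, and for that it is enough to show that in a discrete group the two index families actually coincide, i.e. that a subset of $G$ is compact if and only if it is finite.

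This last fact is elementary point-set topology. A finite set is a finite union of singletons, each of which is compact, hence it is compact. Conversely, if $K\subseteq G$ is infinite, then in the discrete topology $\{\{k\}:k\in K\}$ is an open cover of $K$ that admits no proper subcover, in particular no finite subcover; hence $K$ is not compact. Thus $\{C:C\Subset G\}=\{F\subseteq G:\#F<\infty\}$. One may likewise note that $\BB_0$ in the discrete setting is exactly the collection of finite subsets of $G$ (a set has compact closure iff, being already closed, it is compact, iff finite), although this plays no essential role since $V$ ranges over the same family in both definitions.

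Consequently the infimum defining $\overline{D}(\nu;\#)$ is taken over exactly the same collection of sets, and over identical functionals, as the one defining $\overline{\D}(\nu;\#)$; the two infima agree, which is \eqref{finitecompact-equality}. I do not expect a genuine obstacle here: the only points requiring a word of care are the identification of compact with finite sets just given, and the (implicit, and standard) fact that the counting measure really is the Haar measure of a discrete abelian group — it is translation invariant and finite on compact, i.e. finite, sets.
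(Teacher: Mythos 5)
Your proposal is correct and follows exactly the paper's own (one-line) justification: the paper dismisses the statement as obvious precisely because in a discrete group the compact sets are exactly the finite sets, so the two infima in \eqref{Cnudensity} and \eqref{Fnudensity} range over the same family. You merely spell out the elementary point-set argument the paper leaves implicit.
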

So there is no difference for $\ZZ$, e.g. In general, however, the
two densities, defined above, may well be different: in fact, we
would bet for that, but we have no construction to show this.


\section{Proof of Theorem \ref{th:Requivalence}}
\label{sec:proof}

\noindent \underline{Proof of $\overline{D}(\nu;|\cdot|) \geq
\overline{D}_K(\nu)$}.

Let now $\tau<\tau'< \overline{D}_K(\nu)$ and $C\Subset G$ be
arbitrary. Since $C$ is compact, for some sufficiently large
$r'>0$ we have $C\Subset r'K$, hence by convexity also
$C+rK\subset (r+r')K$ for any $r>0$. On the other hand by $\tau'<
\overline{D}_K(\nu)$ there exist $r_n\to\infty$ and $x_n\in \RR^d$
with $|\nu(r_nK+x_n)|>\tau'|r_nK|$. With large enough $n$, we also
have $|(r_n+r')K|/|r_nK|=(1+r'/r_n)^d < \tau'/\tau$, hence with
$V:=r_n K+x_n$ we find $\nu(V) > \tau'|r_n K| > \tau |(r_n+r')K|=
\tau |x_n+r_n K +r'K| \geq \tau |V+C|$. This proves that
$\overline{D}(\nu;|\cdot|) \geq \tau$, whence the assertion.

\noindent \underline{Proof of $\overline{D}_K(\nu) \geq
\overline{D}(\nu;|\cdot|)$}.

Take now $\tau<\overline{D}(\nu;|\cdot|)$, put $C:=rK$ with some
$r>0$ given, and pick up a measurable set $V$ satisfying $\nu(V) >
\tau |V+C|$. We can then write
\begin{equation}\label{Vnu}
\int \chi_V(t) d\nu(t) > \tau |V+C|~.
\end{equation}
If $t\in V$, $u\in C(=rK)$, then $t+u\in V+C$, hence
$\chi_{V+C}(t+u)=1$, and we get
$$
\chi_V(t)\leq \frac1{|C|} \int \chi_{V+C}(t+u) \chi_C(u) du \qquad
(\forall t\in V)~.
$$
If $t\notin V$, this is obvious, as the left hand side vanishes:
hence \eqref{Vnu} implies
\begin{equation}\label{tauVC}
\tau|V+C| < \int \frac1{|C|} \int  \chi_{V+C}(t+u)\chi_C(u) du
d\nu(t) = \int \chi_{V+C}(y) \frac1{|C|} \int \chi_C(y-t)d\nu(t)
dy~.
\end{equation}
Sinve $C=-C$, the inner function is
$$
f(y):=\frac1{|C|} \int \chi_C(y-t)d\nu(t) = \frac{\nu(C+y)}{|C|}~,
$$
and according to \eqref{tauVC} we have $\tau |V+C| <  \int
\chi_{V+C}(y) f(y) dy = \int_{V+C} f$, hence for some appropriate
point $z\in V+C$ we must have $\tau < f(z)$. That is, $\nu(C+z) >
\tau |C|$, and we get by $C:=rK$ the estimate
\begin{equation}\label{festimate}
\nu(rK+z) > \tau |rK|.
\end{equation}
Since $r$ was arbitrary, it follows that $\overline{D}_K(\nu) \geq
\tau$, and applying this to all $\tau<\overline{D}(\nu;|\cdot|)$
the statement follows.


\section{Extension of the propositions
of Erd\H os-S\'ark\"ozy, of Hegyv\'ari, and of
F\"urstenberg\label{sec:additiveproposgeneralized}}

\begin{theorem}\label{th:general-hegyvari} If $G$ is a LCA group
and $A\subset G$ has $\overline{\D}(A)>0$, then there exists a
finite subset $B\subset G$ so that $A-A+B=G$. Moreover, we can
find $B$ with $\# B \le [1/\overline{\D}(A)]$.
\end{theorem}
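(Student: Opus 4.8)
The plan is to mimic the classical $\sigma$-finite argument of Hegyv\'ari (Proposition \ref{thm:hegyvari}) but phrased in terms of the definition \eqref{FAdensity} of $\overline{\D}$. Set $\delta := \overline{\D}(A) > 0$ and let $m := [1/\delta]$, so that $(m+1)\delta > 1$. The key observation is that if $B \subset G$ is a finite set for which $A - A + B \ne G$, then I can enlarge $B$ by one element, and by iterating this enough times I must eventually exhaust $G$; the density lower bound will cap the number of iterations at $m$.

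Concretely, I would argue by contradiction on the number of elements needed. Suppose $B = \{b_1,\dots,b_k\} \subset G$ is finite with $A - A + B \ne G$, and pick $g \in G \setminus (A - A + B)$. I claim the translate $A + g$ is disjoint from $\bigcup_{i} (A + b_i)$: indeed, if $a + g = a' + b_i$ for some $a,a'\in A$, then $g = a' - a + b_i \in A - A + B$, a contradiction. More generally, starting from the empty set and repeatedly applying this, I obtain elements $b_1, b_2, \dots$ and I can assume $b_1 = 0$, so that the translates $A + b_1, A + b_2, \dots$ are \emph{pairwise} disjoint — at stage $j$, the chosen $b_{j+1}$ lies outside $A - A + \{b_1,\dots,b_j\}$, which by the same computation forces $A + b_{j+1}$ to miss every earlier $A + b_i$. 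Now fix any finite $F = \{b_1,\dots,b_k\}$ produced this way with the $A+b_i$ pairwise disjoint. For an arbitrary $V \in \BB_0$, the sets $A \cap (V + b_i)$, $i=1,\dots,k$, are translates of pairwise disjoint sets and all lie inside $F + V$; hence $\sum_{i=1}^k \mu(A \cap (V+b_i)) = \sum_i \mu((A - b_i)\cap V) \le \mu(F+V)$. Taking $V$ in the definition of $\overline{\D}$ and using translation invariance of $\mu$, each term $\mu((A-b_i)\cap V)$ has supremum over $V$ at least $\overline{\D}(A)\,\mu(F+V)$... — here I need to be slightly careful, since the sup is attained along different $V$ for different $i$; the clean way is: by translation invariance $\overline{\D}(A-b_i) = \overline{\D}(A) = \delta$, so by definition of the infimum over finite sets there is no finite set beating $\delta$, and in particular for \emph{this} fixed $F$ one has $\sup_{V} \mu((A-b_i)\cap V)/\mu(F+V) \ge \delta$ is false in general — instead I should run the supremum once.

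The correct packaging: fix $F = \{b_1,\dots,b_k\}$ with the translates pairwise disjoint. Consider the single measure $\nu' := \sum_{i=1}^k \mu_{A - b_i}$, a sum of traces; for every $V\in\BB_0$ we showed $\nu'(V) \le \mu(F+V)$. On the other hand, by disjointness $\nu'$ is itself the trace $\mu_{A'}$ of $\mu$ on $A' := \bigcup_i (A-b_i)$ (a Borel set), and $\overline{\D}(A') \ge \overline{\D}(A) = \delta$ is false in the wrong direction; what is true is the opposite — so instead I estimate directly: choose $V$ with $\mu((A-b_1)\cap V) > (\delta - \varepsilon)\mu(F+V)$ (possible since $\overline{\D}(A-b_1)=\delta$ means $\inf_{F'}\sup_V (\cdots) = \delta$, hence for \emph{every} finite $F'$, in particular $F'=F$, the sup is $\ge\delta$). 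Wait — that inference is exactly the content of the $\inf$ being $\delta$: since $\overline{\D}(A) = \inf_{F'} \sup_V \mu(A\cap V)/\mu(F'+V) = \delta$, for the particular finite set $F - b_1$ we get $\sup_V \mu(A\cap V)/\mu((F-b_1)+V) \ge \delta$; translating $V \mapsto V + b_1$ turns this into $\sup_V \mu((A-b_1)\cap V)/\mu(F+V) \ge \delta$. The same holds with $b_1$ replaced by each $b_i$. Now pick $\varepsilon>0$; for each $i$ there is $V_i$ with $\mu((A-b_i)\cap V_i) > (\delta-\varepsilon)\mu(F+V_i)$, but these $V_i$ differ, so I cannot simply add. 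The honest fix, and the one I expect Hegyv\'ari's proof uses, is to run the whole growth process so that the \emph{same} $V$ works: i.e. reverse the order of quantifiers by choosing a near-optimal $V$ first.

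Thus the cleaner final plan: fix $\varepsilon>0$ with $(\delta - \varepsilon)^{-1} < m+1$. Since $\overline{\D}(A) = \delta$, for the finite set we are about to build we need a uniform $V$; so build greedily the other way. Start: by definition of $\overline{\D}(A)$ as an infimum over \emph{finite} $F$, for $F = \{0\}$ there is $V_1\in\BB_0$ with $\mu(A\cap V_1) > (\delta-\varepsilon)\mu(V_1)$. If $A - A \ne G$, pick $b_2 \notin A-A$; then $A \cap V_1$ and $(A+b_2)\cap (V_1+b_2)$ are disjoint translates, both inside $\{0,b_2\} + V_1$... and we still face mismatched $V$'s. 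The resolution, which I now commit to as the actual argument: do \emph{not} fix $V$; instead show $A - A + B = G$ directly by a counting/pigeonhole argument on a single large $V$. Choose $B = \{b_1, \dots, b_k\}$ maximal with the property that there exists $V \in \BB_0$ making the translates $(A+b_i)\cap(V+b_i)$ pairwise disjoint and each of $\mu$-measure $> (\delta-\varepsilon)\mu(V)$-ish; since they fit disjointly in $(B \cup\{0\}) + V$ which for $B$ generated inside a fixed compact has controlled measure, $k(\delta-\varepsilon) \le \mu((B+V))/\mu(V)$, and as $V$ grows the right side tends to $k$... so this gives $k \le 1/(\delta-\varepsilon) < m+1$, i.e. $k \le m$. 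The main obstacle — and the step I would spend the most care on — is precisely this interchange of the "$\sup_V$" with the finite union over $B$: in the $\sigma$-finite setting the $H_n$ are nested subgroups so one $H_n$ serves all translates at once, whereas in a general LCA group the optimizing $V$'s for different translates need not coincide, so I must either (i) pass to a common large $V$ exploiting that $B$ is finite hence contained in some compact $C$, and that $\mu(C+V)/\mu(V)\to 1$ as $V$ exhausts $G$ along a suitable net — but LCA groups need not be $\sigma$-compact, so "$V$ exhausts $G$" is delicate — or (ii) use $\overline{\D}$'s defining property for the specific finite translate set $B$ to get, for each target coset, a witnessing $V$, and then argue the coset is hit. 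I expect the paper takes route (ii): assume $g \notin A - A + B$ for a finite $B$ with $\#B = k \le m$; apply the definition of $\overline{\D}(A)$ to the finite set $F := B - g \cup (B - b_1) \cup \dots$ hmm — more simply, to $F := B$, getting $V$ with $\mu(A\cap V) > (\delta - \varepsilon)\mu(B + V)$, then observe $(A - b_i + g)\cap(A-b_j+g)$ arguments show the $k+1$ translates $A\cap V,\ (A+b_i - g)\cap V$ of... — in any case, the disjointness of $k+1$ translates of $A$, each of relative measure $> \delta - \varepsilon$ with respect to a common enlarging set of relative measure $\to 1$, forces $(k+1)(\delta-\varepsilon) \le 1 + o(1)$, contradicting $(m+1)\delta > 1$ once $\varepsilon$ is small and $V$ large. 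Hence every finite $B$ of size $\le m$ with $A - A + B \ne G$ can be extended, and after at most $m$ extensions we reach $A - A + B = G$ with $\#B \le m = [1/\overline{\D}(A)]$, which is the claim.
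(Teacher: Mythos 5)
Your final committed argument coincides with the paper's: any finite $B$ for which the translates $A+b_i$ are pairwise disjoint (equivalently, $(A-A)\cap(B-B)=\{0\}$) satisfies $\#B\le[1/\overline{\D}(A)]$, because applying the definition of $\overline{\D}$ with $F:=B$ itself produces a \emph{single} $V$ with $\mu(A\cap V)>(\delta-\varepsilon)\mu(B+V)$, and the $\#B$ pairwise disjoint translates $(A\cap V)+b_i$, each of measure exactly $\mu(A\cap V)$ by translation invariance, all lie inside $B+V$; a maximal such $B$ then gives $A-A+B=G$. The quantifier interchange you kept circling is thus a non-issue and the resulting inequality $k(\delta-\varepsilon)<1$ is exact --- you need no ``$o(1)$'', no large-$V$ limit, and in particular no F{\o}lner-type estimate $\mu(B+V)/\mu(V)\to 1$, which would indeed be delicate in a general LCA group and which the paper never uses in this proof.
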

\begin{remark} We need a translation-invariant (Haar) measure, but
not the topology or compactness.
\end{remark}
\begin{proof} Assume that $H\subset G$ satisfies
$(A-A)\cap(H-H)=\{0\}$ and let $L=\{b_1,b_2,\dots,b_k\}$ be any
finite subset of $H$. By condition, we have $(A+b_i)\cap
(A+b_j)=\emptyset$ for all $1\le i < j\le k$. Take now $C:=L$ in
the definition of density \eqref{FLdensity} and take
$0<\tau<\rho:=\overline{\D}(A)$. By Definition \ref{finitedensity}
of the density $\overline{\D}(A)$, there are $x\in G$ and $V
\subset G$ open with compact closure -- or, a $V\in \SA$ with
$0<|V|<\infty$ -- satisfying
\begin{equation}\label{AVx}
|A\cap(V+x)|>\tau|V+L|~.
\end{equation}
On the other hand
\begin{equation}\label{VLetc}
V+L=\bigcup_{j=1}^k \left(V+x+(b_j-x) \right)\supset
\bigcup_{j=1}^k \left( ((V+x)\cap A)+b_j \right)-x
\end{equation}
and as $A+b_j$ (thus also $((V+x)\cap A) +b_j$) are disjoint, and
the Haar measure is translation invariant, we are led to
\begin{equation}\label{kVxA}
|V+L|\ge k|(V+x)\cap A|~.
\end{equation}
Comparing \eqref{AVx} and \eqref{kVxA} we obtain
\begin{equation}\label{tauk}
|A\cap(V+x)|>\tau k|(V+x)\cap A|\qquad\qquad \text{and also}
\qquad\qquad |V+L| > k\tau |V+L|~,
\end{equation}
hence after cancellation by $|V+L|>0$ we get $k<1/\tau$ and so in
the limit $k\le K:=[1/\rho]$. It follows that $H$ is necessarily
finite and $\# H\le K$.

So let now $B=\{b_1,b_2,\dots,b_k\}$ be any set with the property
$(A-A)\cap(B-B)=\{0\}$ (which implies $\# B \le K$) and maximal in
the sense that for no $b'\in G\setminus B$ can this property be
kept for $B':=B\cup\{b'\}$. In other words, for any $b'\in
G\setminus B$ it holds that $(A-A)\cap (B'-B')\ne\{0\}$.

Clearly, if $A-A=G$ then any one point set $B:=\{b\}$ is such a
maximal set; and if $A-A\ne G$, then a greedy algorithm leads to
one in $\le K$ steps.

Now we can prove $A-A+B=G$. Indeed, if there exists $y\in
G\setminus(A-A+B)$, then $(y-b_j)\notin A-A$ for $j=1,\dots,k$,
hence $B':=B\cup\{y\}$ would be a set satisfying
$(B'-B')\cap(A-A)=\{0\}$, contradicting maximality of $B$.
\end{proof}

\begin{corollary} Let $A\subset \RR^d$ be a (measurable) set with
$\overline{\D}(A)>0$. Then there exists $b_1,\dots,b_k$ with $k
\le K := [1/\overline{\D}(A)]$ so that $\cup_{j=1}^k
(A-A+b_j)=\RR^d$.
\end{corollary}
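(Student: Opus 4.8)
The plan is to derive this as an immediate specialization of Theorem \ref{th:general-hegyvari} to the group $G=\RR^d$, observing that the topology plays no role in that theorem (as the remark there emphasizes) and only the translation-invariant Haar measure is needed. First I would recall that the Haar measure on $\RR^d$ is (up to normalization) the Lebesgue volume $|\cdot|$, which is exactly the $\mu$ used in Definition \ref{finitedensity}, so that the quantity $\overline{\D}(A)$ appearing in the corollary is precisely $\overline{\Delta}(\nu_A;|\cdot|)$ from \eqref{FAdensity}. Since $A\subset\RR^d$ is assumed measurable, $\nu_A(H)=|A\cap H|$ is a well-defined measure, so the hypothesis $\overline{\D}(A)>0$ is meaningful and matches the hypothesis of Theorem \ref{th:general-hegyvari}.

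Next I would simply invoke Theorem \ref{th:general-hegyvari} with this $G$ and $A$: it produces a finite set $B=\{b_1,\dots,b_k\}\subset\RR^d$ with $A-A+B=\RR^d$ and $\#B\le[1/\overline{\D}(A)]$. Writing out $A-A+B=\bigcup_{j=1}^k(A-A+b_j)$ and setting $K:=[1/\overline{\D}(A)]$ gives exactly the asserted conclusion $\bigcup_{j=1}^k(A-A+b_j)=\RR^d$ with $k\le K$.

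There is essentially no obstacle here; the only point requiring a word of care is the bookkeeping identification of $\overline{\D}(A)$ in the corollary's statement with the abstract density $\overline{\Delta}(\nu_A;\mu)$ for the particular group $\RR^d$, and the trivial set-algebra identity $A-A+B=\bigcup_j(A-A+b_j)$. I would therefore present the proof as one or two sentences: ``Apply Theorem \ref{th:general-hegyvari} to $G=\RR^d$ with Haar measure $|\cdot|$; the produced finite set $B=\{b_1,\dots,b_k\}$ satisfies $k\le[1/\overline{\D}(A)]$ and $\bigcup_{j=1}^k(A-A+b_j)=A-A+B=\RR^d$.'' If desired one could also remark that the same argument applies verbatim to $\ZZ^d$ with the counting measure, but the corollary as stated only claims the $\RR^d$ case, so no further work is needed.
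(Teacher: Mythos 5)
Your proposal is correct and matches the paper's (implicit) treatment exactly: the corollary is stated as an immediate specialization of Theorem \ref{th:general-hegyvari} to $G=\RR^d$ with Haar measure equal to Lebesgue volume, and the only content is the identification $A-A+B=\bigcup_{j=1}^k(A-A+b_j)$ together with the bound $\#B\le[1/\overline{\D}(A)]$ already supplied by the theorem. Nothing further is needed.
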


This is interesting as it shows that the difference set of a set
of positive Banach density $\overline{\D}$ is necessarily rather
large: just a few translated copies cover the whole space.

Observe that we have Proposition \ref{prop:Füst} as an immediate
consequence, since $\ZZ$ is discrete, and thus the two notions
$\overline{\D}$ and $\overline{D}$ of Banach densities coincide;
moreover, the finite set $B:=\{b_1,\dots,b_K\}$ is a compact set
in the discrete topology of $\ZZ$. But in fact we can as well
formulate the following extension.

\begin{corollary}\label{cor:genFüst} Let $G$ be a LCA group and
$S\subset G$ a set with positive upper Banach density, i.e.
$\overline{D}(S)>0$, where here
$\overline{D}(S)=\overline{D}(\mu|_S;\mu)$. Then the difference
set $S-S$ is a syndetic set: moreover, the set of translations
$K$, for which we have $G=KS$, can be chosen not only compact, but
even to be a finite set with $\# K \leq [1/\overline{D}(S)]$
elements.
\end{corollary}

This corollary is immediate, because $\overline{\D}(S)\geq
\overline{D}(S)$ according to Proposition
\ref{prop:densitycompari}.

This indeed generalizes the proposition of F\"urstenberg. Also
this result contains the result of Hegyv\'ari: for on
$\sigma$-finite groups the natural topology is the discrete
topology, whence the natural Haar measure is the counting measure,
and so on $\sigma$-finite groups Corollary \ref{cor:genFüst} and
Theorem \ref{th:general-hegyvari} coincides. Finally, this also
generalizes and sharpens the Proposition of Erd\H os and
S\'ark\"ozy. Indeed, on $\ZZ$ or $\NN$ we naturally have
$\overline{\D}(A)=\overline{D}(A)\geq \overline{d}(A)$, so if the
latter is positive, then so is $\overline{D}(A)$; and then the
difference set is syndetic, with finitely many translates
belonging to a translation set $K$, say, covering the whole $\ZZ$.
Hence $d_{n+1}-d_n$ is necessarily smaller than the maximal
element of the finite set $K$ of translations.

\begin{theorem}\label{thm:strongFüsi} Let $G$ be a LCA group and
$S\subset G$ a set with a positive, (but finite) uniform
asymptotic upper density, regarding now the counting measure of
elements of $S$ in the definition of Banach density, i.e.
$\overline{D}(S)= \overline{D}(\#|_S;\mu)>0$. Then the difference
set $S-S$ is a syndetic set.
\end{theorem}
\begin{remark} One would like to say that a density $+\infty$ is
"even the better". However, in non-discrete groups this is not the
case: such a density can in fact be disastrous. Consider e.g. the
set of points $S:=\{1/n~:~n\in \NN\}$ as a subset of $\RR$.
Clearly for any compact $C$ of positive Haar /i.e. Lebesgue/
measure $|C|>0$, and for any $V\in \BB_0$ of finite measure and
compact closure, $|V+C|$ is positive but finite: whence whenever
$0\in {\rm int} V$, we automatically have $\#(S\cap V)=\infty$ and
also $\#(S\cap V)/|C+V|=\infty$, therefore
$\overline{D}(\#|_S;|\cdot|)=\infty$; but $S-S\subset [-2,2]$ and
thus with a compact $B$ it is not possible that $B+S-S$ covers
$G=\RR$, whence $S-S$ is not syndetic.
\end{remark}
\begin{problem}
The implicitly occurring set of translations $K$, for which we
have $G=K+(S-S)$, is not controlled in size by the proof below.
However, one would like to say that there must be some bound,
hopefully even $\mu (K) \leq [1/\overline{D}(S)]$, for an
appropriately chosen compact set of translates $K$. This we cannot
prove yet.
\end{problem}
\begin{proof} We are not certain that our argument is the simplest
possible: also, it does not give a good estimate for the measure
of the required compact set exhibiting the syndetic property of
$S-S$. Nevertheless, we consider it worthwhile to present it in
full detail, since the various steps, eventually leading to the
result, seem to be rather general and useful auxiliary statements,
having their own independent interest. Correspondingly, we break
the argument in a series of lemmas.

\begin{lemma}\label{l:packdensity} Let $S\subset G$ and assume
$\overline{D}(\#|_S;\mu) =\rho\in (0,\infty)$. Consider any
compact set $H \subset G$ satisfying the "packing type condition"
$H-H\cap S-S =\{0\}$ with $S$. Then we necessarily have $\mu(H)
\leq 1/\overline{D}(S)$.
\end{lemma}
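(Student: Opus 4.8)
The plan is to convert the packing hypothesis into a count of pairwise disjoint translates of $H$ and then play this off against the $\inf$–$\sup$ defining $\overline{D}(\#|_S;\mu)$, exploiting the freedom to choose the compact set $C$ in that definition to be $H$ itself.

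First I would record the only structural consequence of the hypothesis that is needed: the condition $(H-H)\cap(S-S)=\{0\}$ says precisely that the translates $H+s$, $s\in S$, are pairwise disjoint. Indeed, if $h+s=h'+s'$ with $s,s'\in S$, $h,h'\in H$ and $s\neq s'$, then $h-h'=s'-s$ lies in $(H-H)\cap(S-S)$, forcing $s=s'$, a contradiction. We may of course assume $\mu(H)>0$, as otherwise the assertion is trivial.

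Next, fix any $\tau$ with $0<\tau<\rho:=\overline{D}(\#|_S;\mu)$. By \eqref{CLdensity} (with $\Lambda=S$) the density $\rho$ is an \emph{infimum} over compact sets $C$, so choosing $C:=H$ gives $\sup_{V\in\BB_0}\#(S\cap V)/\mu(H+V)\ge\rho>\tau$; hence there is a $V\in\BB_0$ with $\#(S\cap V)>\tau\,\mu(H+V)$. Since $H$ and $\overline V$ are compact, $H+\overline V$ is compact, so $\mu(H+V)\le\mu(H+\overline V)<\infty$, and therefore $\#(S\cap V)>\tau\,\mu(H+V)\ge 0$, i.e. $\#(S\cap V)\ge 1$. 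Moreover the pairwise disjoint Borel sets $H+s$, $s\in S\cap V$, all have measure $\mu(H)>0$ and all lie inside the finite-measure set $H+\overline V$, so $S\cap V$ must be finite, say $S\cap V=\{s_1,\dots,s_m\}$ with $m=\#(S\cap V)\ge 1$.

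Finally I would assemble the packing inequality. Each $s_i\in V$ gives $H+s_i\subseteq H+V$, so by disjointness and translation invariance of the Haar measure
\[
\mu(H+V)\ \ge\ \mu\Big(\bigcup_{i=1}^{m}(H+s_i)\Big)\ =\ \sum_{i=1}^{m}\mu(H+s_i)\ =\ m\,\mu(H).
\]
Combining this with $m=\#(S\cap V)>\tau\,\mu(H+V)$ yields $m>\tau\, m\,\mu(H)$, and cancelling $m>0$ gives $\mu(H)<1/\tau$. Letting $\tau\uparrow\rho$ we conclude $\mu(H)\le 1/\rho=1/\overline{D}(S)$. I do not expect a genuine obstacle here; the only points needing a word of care are the finiteness bookkeeping (compactness of $H+\overline V$ forces both $\mu(H+V)<\infty$ and $S\cap V$ finite) and the finite additivity of the Haar (outer) measure over the disjoint Borel sets $H+s_i$. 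The argument is exactly the continuous counterpart of the counting step used in the proof of Theorem \ref{th:general-hegyvari}.
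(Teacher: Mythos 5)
Your proof is correct and follows essentially the same route as the paper's: take $C:=H$ in the infimum defining the density, extract a $V$ with $\#(S\cap V)>\tau\mu(H+V)$, use the packing condition to see that the translates $s+H$, $s\in S\cap V$, are pairwise disjoint, and cancel $\#(S\cap V)$ to get $\mu(H)<1/\tau$. Your explicit bookkeeping of why $S\cap V$ is finite and why $\mu(H+V)<\infty$ is a small but welcome tightening of a point the paper merely asserts.
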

\begin{proof}
Let $0<\tau<\rho$ be arbitrary. By definition of
$\overline{D}(S)$, (using $H$ in place of $C$) there must exist a
measurable set $V\in \SA\cap \BB_0$, with compact closure so that
$\infty>\#(S\cap V)>\tau \mu(V+H)$, therefore also $\#(S\cap
V)>\mu((S\cap V)+H)$. However, for any two elements $s\ne s' \in
(S\cap V)\subset S$, $(s+H)\cap (s'+H) =\emptyset$, since in case
$g\in (s+H)\cap (s'+H)$ we have $g=s+h=s'+h'$, i.e. $s-s'=h-h'$,
which is impossible for $s\ne s'$ and $(H-H)\cap (S-S)=\{0\}$.
Therefore for each $s\in (S\cap V)$ there is a translate of $H$,
totally disjoint  from all the others: i.e. the union $(S\cap
V)+H= \cup_{s\in(S\cap V)}(s+H)$ is a disjoint union. By the
properties of the Haar measure, we thus have $\mu((V\cap
S)+H)=\sum_{s\in(S\cap V)} \mu(s+H)= \#(V\cap S)\mu(H)$.

Whence we find $\#(S\cap V) \geq \tau \#(S\cap V) \mu(H)$, and,
since $\#(S\cap V)>\tau \mu(V+H)$ was positive, we can cancel with
it and infer $\mu(H)<1/\tau$. This holding for all
$\tau<\rho=\overline{D}(S)$, we obtained that any compact set $H$,
satisfying the packing type condition with $S$, is necessarily
bounded in measure by $1/\overline{D}(S)$.
\end{proof}

\begin{lemma}\label{l:fattening} Suppose that $S-S\cap H-H=\{0\}$
with $\overline{D} (\#|_S;\mu) =\rho\in (0,\infty)$ and $H\Subset
G$ with $0<\mu(H-H)$. Then the set $A:=S+(H-H)$ has the uniform
asymptotic upper density $\overline{D}(\mu|_A;\mu)$, with respect
to the Haar measure (restricted to $A$), not less than $\rho\cdot
\mu(H-H)$.
\end{lemma}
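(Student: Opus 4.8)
The plan is to read off the bound directly from Definition \ref{compactdensity}. Writing $\overline{D}(\mu|_A;\mu)=\inf_{C\Subset G}\sup_{V\in\BB_0}\mu(A\cap V)/\mu(C+V)$, it suffices to prove that for every compact $C\Subset G$ and every $\tau<\rho$ there is a set $V\in\BB_0$ (which we may take open with compact closure) with
\[
\mu(A\cap V)\ \ge\ \tau\,\mu(H-H)\,\mu(C+V);
\]
indeed, this makes $\sup_{V\in\BB_0}\mu(A\cap V)/\mu(C+V)\ge\tau\,\mu(H-H)$ for every $C$, and letting $\tau\uparrow\rho$ gives $\overline{D}(\mu|_A;\mu)\ge\rho\,\mu(H-H)$. (Note that $\rho<\infty$ forces $S$ to be locally finite and closed, so that the finite-count quantities below make sense.)

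So I would fix $C\Subset G$ and $\tau<\tau'<\rho$. Since $H$ is relatively compact, $H-H$ is compact, hence $C':=C+(H-H)$ is compact. Applying Definition \ref{compactdensity} to $\nu=\#|_S$ with the compact set $C'$ and using $\overline{D}(\#|_S;\mu)=\rho>\tau'$, we obtain an open $W$ with compact closure, $0<\#(S\cap W)<\infty$, such that
\[
\#(S\cap W)\ >\ \tau'\,\mu(C'+W)\ =\ \tau'\,\mu\bigl(C+(H-H)+W\bigr).
\]
Now set $V:=W+(H-H)$; it is open with compact closure, and $\mu(C+V)=\mu\bigl(C+(H-H)+W\bigr)$, so the last inequality reads $\#(S\cap W)>\tau'\,\mu(C+V)$. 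For every $s\in S\cap W$ one has $s+(H-H)\subseteq A$ (as $A=S+(H-H)$) and $s+(H-H)\subseteq W+(H-H)=V$, whence
\[
A\cap V\ \supseteq\ \bigcup_{s\in S\cap W}\bigl(s+(H-H)\bigr).
\]
Thus the proof is complete once one shows $\mu(A\cap V)\ge\#(S\cap W)\,\mu(H-H)$, since then $\mu(A\cap V)\ge\#(S\cap W)\,\mu(H-H)>\tau'\,\mu(H-H)\,\mu(C+V)>\tau\,\mu(H-H)\,\mu(C+V)$.

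The volume estimate $\mu(A\cap V)\ge\#(S\cap W)\,\mu(H-H)$ is the heart of the matter, and the step I expect to be the main obstacle. The tool is the packing hypothesis $(S-S)\cap(H-H)=\{0\}$: exactly as in the proof of Lemma \ref{l:packdensity}, it forces the translates $s+H$ ($s\in S$) to be pairwise disjoint — if $g\in(s+H)\cap(s'+H)$ then $s-s'\in H-H$, so $s=s'$. This disjointness already yields $\mu(A\cap V)\ge\#(S\cap W)\,\mu(H)$ without difficulty, e.g. by translating the disjoint pieces $s+H$ into $V$, or by integrating against $\chi_V$ the pointwise inequality $\chi_A(y)\ge\frac1{\mu(H)}\sum_{s\in S}(\chi_H*\chi_{-H})(y-s)$ (valid because the left side equals $1$ wherever the right side is positive, and the right side never exceeds $1$ by disjointness). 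The delicate point is to upgrade the factor $\mu(H)$ to the full $\mu(H-H)$: the fattened translates $s+(H-H)$, $s\in S\cap W$, occurring in the displayed union need not be pairwise disjoint, because the hypothesis separates $S-S$ from $H-H$ but not from the larger set $(H-H)-(H-H)$. Controlling the overlaps between these fattened translates — i.e. proving $\mu\bigl(\bigcup_{s\in S\cap W}(s+(H-H))\bigr)\ge\#(S\cap W)\,\mu(H-H)$, or reorganizing the test set $V$ so that no such loss occurs — is the crux, and it is here that the packing condition must be exploited most carefully.
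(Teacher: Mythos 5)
Your setup---enlarging the test compact to $C':=C+(H-H)$, extracting a window $W$ with $\#(S\cap W)>\tau'\mu(C'+W)$ from the counting density of $S$, and then testing the density of $A$ with $V:=W+(H-H)$---is exactly the paper's argument (there $V=U+Q$ with $Q:=H-H$), and the step you single out as the crux is precisely where everything hinges. You are right to be suspicious: the inequality $\mu\bigl(\bigcup_{s\in S\cap W}(s+(H-H))\bigr)\ge\#(S\cap W)\,\mu(H-H)$ does \emph{not} follow from $(S-S)\cap(H-H)=\{0\}$, and no reorganization of $V$ rescues the constant $\mu(H-H)$. At this point the paper simply asserts that the translates $s+Q$, $s\in S$, are pairwise disjoint ``by the packing property, because $s+q=t+q'$ gives $s-t=q'-q$, which is impossible by condition''; but $q'-q$ lies in $(H-H)-(H-H)$, not in $H-H$, so the stated condition does not apply. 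Indeed the lemma is false as stated: take $G=\RR$, $S=\tfrac32\ZZ$, $H=[0,1]$. Then $(S-S)\cap(H-H)=\tfrac32\ZZ\cap[-1,1]=\{0\}$ and $\rho=\tfrac23$, $\mu(H-H)=2$, so the claimed bound is $\tfrac43$; yet $A=S+[-1,1]=\RR$ has density $1$ (and no set can have density exceeding $1$, since $\mu(A\cap V)\le\mu(V)\le\mu(C+V)$).

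The correct statement is the weaker bound you already obtained. Fix $h_0\in H$; under $(S-S)\cap(H-H)=\{0\}$ the sets $s+H-h_0$, $s\in S$, are pairwise disjoint, each has measure $\mu(H)$, and each is contained in $s+(H-H)\subseteq A\cap V$ for $s\in S\cap W$; hence $\mu(A\cap V)\ge\#(S\cap W)\,\mu(H)>\tau'\mu(H)\,\mu(C+V)$, giving $\overline{D}(\mu|_A;\mu)\ge\rho\,\mu(H)$ (one must then add the hypothesis $\mu(H)>0$, which $\mu(H-H)>0$ alone does not guarantee). Alternatively one keeps the constant $\mu(H-H)$ by strengthening the hypothesis to $(S-S)\cap\bigl((H-H)+(H-H)\bigr)=\{0\}$. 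Either repair propagates harmlessly: in Lemma \ref{l:ifclearthengood} the bound on $\# B$ becomes $[1/(\rho\mu(H))]$, and in the main syndeticity theorem $H$ is a compact neighborhood of $0$ (so $\mu(H)>0$) and only positivity of the density of $A$ is used, so the chain of lemmas still closes. In short: your skeleton coincides with the paper's, the ``obstacle'' you could not overcome is a genuine error in the paper's own proof rather than a missing trick, and your fallback estimate with $\mu(H)$ is the version one should actually prove and use.
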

\begin{proof} Let $C\Subset G$ be arbitrary and denote $Q:=H-H$.
We want to estimate from below the ratio $\mu(A\cap V)/\mu(C+V)$
for an appropriately chosen $V\in \BB_0$. Let us fix that we will
take for $V$ some set of the form $U+Q$ with $U\in \BB_0$. Clearly
$A\cap V = (S+Q)\cap (U+Q) \supset (S\cap U)+Q$. Now for any two
elements $s\ne t \in S$, thus even more for $s,t \in (S\cap V)$,
the sets $s+Q$ and $t+Q$ are disjoint, this being an easy
consequence of the packing property because $s+q=t+q'
\Leftrightarrow s-t=q-r$, which is impossible for $s-t\ne 0$ by
condition. Therefore by the properties of the Haar measure we get
$\mu((S\cap U)+Q)=\sum_{s\in(S\cap U)} \mu(s+Q)=\#(S\cap U) \cdot
\mu(Q)$. In all, we found $\mu(A\cap V)\geq \#(S\cap U) \cdot
\mu(Q)$.

It remains to choose $V$, that is, $U$, appropriately. For the
compact set $C+Q\Subset G$ and for any given small $\ve>0$, by
definition of $\overline{D} (\#|_S;\mu)=\rho$ there exists some
$U\in \BB_0$ such that $\#(S\cap U) >(\rho-\ve) \mu((C+Q)+U)$.
Choosing this particular $U$ and combining the two inequalities we
are led to $\mu(A\cap V)\geq (\rho-\ve) \mu(C+Q+U) \mu(Q)$, that
is, for $V:=U+Q$ written in $\mu(A\cap V)/\mu(C+V)\geq (\rho-\ve)
\mu(H-H)$.

As we find such a $V$ for every positive $\ve$, the sup over
$V\in\BB_0$ is at least $\rho\mu(H-H)$, and because $C\Subset G$
was arbitrary, we infer the assertion.
\end{proof}

\begin{lemma}\label{l:ifclearthengood} Suppose that $S-S\cap
H-H=\{0\}$ with $\overline{D} (\#|_S;\mu) =\rho\in (0,\infty)$ and
$H\Subset G$ with $0<\mu(H-H)$. Then there exists a finite set
$B=\{b_1,\dots,b_k\}\subset G$ of at most $k\leq
[1/(\rho\mu(H-H))]$ elements so that $B+(H-H)-(H-H)+(S-S)=G$. In
particular, the set $S-S$ is syndetic with the compact set of
translates $B+(H-H+H-H)$.
\end{lemma}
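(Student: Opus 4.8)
The plan is to deduce this lemma from Theorem \ref{th:general-hegyvari} applied to the ``fattened'' set $A:=S+(H-H)$, for which Lemma \ref{l:fattening} has already supplied a positive uniform asymptotic upper density.

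First I would abbreviate $Q:=H-H$; note that $Q$ is compact, symmetric ($Q=-Q$), contains $0$, and has $\mu(Q)>0$ by hypothesis. Set $A:=S+Q$. Lemma \ref{l:fattening} gives $\overline{D}(\mu|_A;\mu)\ge\rho\,\mu(Q)$, and Proposition \ref{prop:densitycompari} then yields $\overline{\D}(A)=\overline{\D}(\mu|_A;\mu)\ge\overline{D}(\mu|_A;\mu)\ge\rho\,\mu(Q)>0$, so $A$ is a legitimate input for Theorem \ref{th:general-hegyvari}.

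Next I would invoke Theorem \ref{th:general-hegyvari} for $A$: there is a finite set $B=\{b_1,\dots,b_k\}\subset G$ with $A-A+B=G$ and with $k=\#B\le[1/\overline{\D}(A)]\le[1/(\rho\,\mu(Q))]=[1/(\rho\,\mu(H-H))]$, the middle step using the lower bound for $\overline{\D}(A)$ obtained above together with the monotonicity of $x\mapsto[x]$. It then remains to expand $A-A$. Since $G$ is abelian, $A-A=(S+Q)-(S+Q)=(S-S)+(Q-Q)=(S-S)+(H-H)-(H-H)$, whence $G=A-A+B=B+(H-H)-(H-H)+(S-S)$, which is the first assertion. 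For the syndeticity statement I would put $K:=B+\bigl((H-H)-(H-H)\bigr)$; since $H$ is compact, $(H-H)-(H-H)$ is the image of $H^{4}$ under the continuous map $(a,b,c,d)\mapsto a-b+c-d$, hence compact, and $B$ is finite, so $K$ is compact. The identity $G=K+(S-S)$ says exactly that every $g\in G$ lies in $k+(S-S)$ for some $k\in K$, i.e. $\bigcup_{k\in K}\bigl((S-S)+k\bigr)=G$, which is the syndeticity of $S-S$ with compact translate set $K$ (equivalently $-K$, since $S-S$ is symmetric).

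I do not foresee a real obstacle here: the substance was already carried by Lemmas \ref{l:packdensity}--\ref{l:fattening} and by Theorem \ref{th:general-hegyvari}. The points needing care are purely bookkeeping: matching the numerical bound $[1/(\rho\,\mu(H-H))]$ forces the passage through the inequality $\overline{\D}\ge\overline{D}$ of Proposition \ref{prop:densitycompari} rather than using $\overline{D}(A)$ directly; for arbitrary $S$ the set $A=S+(H-H)$ need not be Borel, so $\mu|_A$ must be read as in the rest of the paper (restriction/outer Haar measure), exactly as in Lemma \ref{l:fattening}; and one must keep in mind that $(H-H)-(H-H)$ is compact so that the conclusion genuinely asserts syndeticity. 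Alternatively, instead of citing Theorem \ref{th:general-hegyvari} one could re-run its greedy/maximality argument directly on $A$ --- choosing $B$ maximal with $(B-B)\cap(A-A)=\{0\}$ and invoking Lemma \ref{l:packdensity} to bound $\#B$ --- which produces the same $B$ and the same cardinality estimate.
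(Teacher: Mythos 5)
Your proposal is correct and follows essentially the same route as the paper: the paper likewise applies Lemma \ref{l:fattening} to $A=S+(H-H)$ and then invokes Corollary \ref{cor:genF�st} (which is exactly Theorem \ref{th:general-hegyvari} combined with Proposition \ref{prop:densitycompari}, the combination you spell out explicitly) to obtain $B$ with $A-A+B=G$ and $\#B\leq[1/(\rho\mu(H-H))]$, before expanding $A-A=(S-S)+(H-H)-(H-H)$. Your write-up is in fact slightly more careful than the paper's about the compactness of $(H-H)-(H-H)$ and the arithmetic of the cardinality bound.
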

\begin{proof} By the above Lemma \ref{l:fattening} we have
an estimate on the density of $A:=S+(H-H)$ with respect to Haar
measure. But then we may apply Corollary \ref{cor:genFüst} to see
that the difference set $S+(H-H)-(S+(H-H))$ is a syndetic set with
the set of translates $B$ admitting $\#B\leq
[1/\overline{D}(\mu|_A;\mu)]\leq [1/(\rho\mu(H-H))]$. Because also
the set $H$ is compact, this yields that $S$ is syndetic as well,
with set of translations being $B+(H-H)+(H-H)$.
\end{proof}

One may think that it is not difficult, for a discrete set $S$ of
finite density with respect to counting measure, to find a compact
neighborhood $R$ of 0, so that $R\cap (S-S)$ be almost empty with
0 being its only element. If so, then by continuity of
subtraction, also for some compact neighborhood $H$ of zero with
$(H-H)\subset R$ (and, being a neighborhood, with $\mu(H)>0$, too)
we would have $(H-H)\cap(S-S)=\{0\}$, the packing type condition,
whence concluding the proof of Theorem \ref{thm:strongFüsi}.

Unfortunately this idea turns to be naive. Consider the sequence
$S=\{n+1/n ~:~ n\in \NN\} \cup \NN$ (in $\RR$), which has uniform
asymptotic upper density 2 with the cardinality measure, whilst
$S-S$ is accumulating at 0.

Nevertheless, this example is instructive. What we will find, is
that sets of \emph{finite} positive uniform asymptotic upper
density cannot have a too dense difference set: it always splits
into a fixed, bounded number of disjoint subsets so that the
difference set of each subset already leaves out a fixed compact
neighborhood of 0. This will be the substitute for the above naive
approach to finish our proof of Theorem \ref{thm:strongFüsi}
through proving also some kind of subadditivity of the uniform
asymptotic upper density -- another auxiliary statement
interesting for its own right.

\begin{lemma}\label{l:partition} Let $Q\Subset G$ be any symmetric
compact neighborhood of 0 and let $S$ have positive but finite
uniform asymptotic upper density with respect to cardinality
measure, i.e. $\overline{D} (\#|_S;\mu) =\rho\in (0,\infty)$. Then
there exists a finite disjoint partition $S=\bigcup_{j=1}^n S_j$
of $S$ such that $(S_j-S_j)\cap Q =\{0\}$. Moreover, choosing an
appropriate symmetric compact neighborhood $Q$ of 0, depending on
$\ve>0$, we can even guarantee that the number of subsets in the
partition is not more than $k\leq [(1+\ve)\rho\mu(Q)]$.
\end{lemma}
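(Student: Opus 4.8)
The plan is to realise the required partition as a proper vertex-colouring of an auxiliary graph of bounded degree. Fix a symmetric compact neighbourhood $Q$ of $0$ and let $\Gamma_Q$ be the simple loopless graph on vertex set $S$ in which two vertices $s\ne t$ are adjacent precisely when $s-t\in Q$ (a symmetric relation, since $Q=-Q$). Any proper colouring of $\Gamma_Q$ with $n$ colours furnishes the desired partition: its non-empty colour classes $S_1,\dots,S_m$ with $m\le n$ are pairwise disjoint and cover $S$, and for $s\ne t$ in one class $s-t\notin Q$, so $(S_j-S_j)\cap Q=\{0\}$ (the point $0$ lying in $S_j-S_j$ and in $Q$). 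So it suffices to bound the chromatic number $\chi(\Gamma_Q)$.

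First I would bound the maximum degree of $\Gamma_Q$ using only that $\rho=\overline D(\#|_S;\mu)$ is finite. Pick $\tau>\rho$; by Definition \ref{compactdensity} there is a compact $C\Subset G$, which we may enlarge to a symmetric compact neighbourhood $W$ of $0$, such that $\#(S\cap V)\le\tau\,\mu(W+V)$ for every $V\in\BB_0$. For a vertex $s$ its neighbour set is $(S\cap(s+Q))\setminus\{s\}$ (using $Q=-Q$ again), and $s+Q\in\BB_0$, hence $\deg_{\Gamma_Q}(s)\le\#(S\cap(s+Q))-1\le\tau\,\mu(W+Q)-1$, uniformly in $s$. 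A graph of maximum degree $\Delta<\infty$ is properly $(\Delta+1)$-colourable (well-order the vertices and colour greedily, each vertex having at most $\Delta$ earlier-coloured neighbours), so $\chi(\Gamma_Q)\le\tau\,\mu(W+Q)<\infty$. With $Q$ arbitrary this already gives the first, qualitative assertion.

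For the quantitative statement the aim is to choose $Q$ so that $\mu(W+Q)$ only barely exceeds $\mu(Q)$. Given $\ve>0$ I would fix $\tau\in(\rho,(1+\ve)\rho)$ and $\delta>0$ with $\tau(1+\delta)\le(1+\ve)\rho$, obtain the corresponding $W$, and then look for a symmetric compact neighbourhood $Q$ of $0$ with $\mu(W+Q)\le(1+\delta)\,\mu(Q)$; granting this, the degree estimate gives $\chi(\Gamma_Q)\le\tau\,\mu(W+Q)\le\tau(1+\delta)\,\mu(Q)\le(1+\ve)\rho\,\mu(Q)$, hence $\chi(\Gamma_Q)\le[(1+\ve)\rho\,\mu(Q)]$ since the left side is an integer. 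To produce $Q$ I would take iterated sumsets $W_k:=W+\dots+W$ ($k$ summands): these are symmetric compact neighbourhoods of $0$, $\mu(W_k)$ is non-decreasing, $W+W_k=W_{k+1}$, and if some index $k$ satisfies $\mu(W_{k+1})\le(1+\delta)\,\mu(W_k)$ then $Q:=W_k$ does the job.

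The step I expect to be the real obstacle is exactly the existence of such a $k$, i.e. that $\mu(W_k)$ does not grow geometrically — here one must use more than translation invariance. The subgroup $\langle W\rangle=\bigcup_kW_k$ is open (hence closed) and compactly generated, so by the structure theorem for compactly generated LCA groups (see e.g. \cite{rudin:groups}) it is topologically isomorphic to $\RR^a\times\ZZ^b\times K$ with $K$ compact; consequently $\mu(W_k)=O(k^{a+b})$, which is incompatible with $\mu(W_{k+1})>(1+\delta)\,\mu(W_k)$ holding for all $k$ (that would force $\mu(W_k)\ge c(1+\delta)^k$). One could instead invoke amenability of LCA groups and a Følner net, but then extracting a \emph{symmetric compact neighbourhood of $0$} with the ratio estimate is just as delicate, whereas the iterated-sumset device supplies the symmetry and the neighbourhood property for free. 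Granting the growth bound, the argument above is complete.
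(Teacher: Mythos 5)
Your argument is correct, and its combinatorial skeleton is exactly that of the paper: bound $\#\bigl(S\cap(s+Q)\bigr)$ uniformly in $s$ via the finiteness of $\overline{D}(\#|_S;\mu)$, view this as a degree bound for the graph on $S$ joining $s\ne t$ when $s-t\in Q$, and extract the partition as a proper colouring by transfinite greedy colouring. The single point where you genuinely diverge is the source of the almost-invariant set: for the quantitative bound $k\le[(1+\ve)\rho\mu(Q)]$ the paper simply invokes Rudin's Theorem 2.6.7 of \cite{rudin:groups}, which supplies, for the compact set $C$ coming from the density definition, a symmetric compact neighbourhood $Q$ of $0$ with $\mu(C+Q)<(1+\ve)\mu(Q)$; you instead re-derive this F\o lner-type property from scratch, by iterating sumsets $W_k=W+\dots+W$ and ruling out geometric growth of $\mu(W_k)$ via the structure theorem for the compactly generated open subgroup $\langle W\rangle\cong\RR^a\times\ZZ^b\times K$. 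That derivation is sound (polynomial growth $O(k^{a+b})$ is incompatible with $\mu(W_{k+1})>(1+\delta)\mu(W_k)$ for all $k$, and each $W_k$ is automatically a symmetric compact neighbourhood of $0$), and it buys you a self-contained proof at the cost of invoking the structure theorem — which is essentially the machinery hidden inside Rudin's proof anyway. Two small points worth making explicit if you write this up: enlarging the compact set $C$ from the density definition to a symmetric compact neighbourhood $W\supset C$ is harmless because $\mu(W+V)\ge\mu(C+V)$ only improves the bound; and the final integer-part step uses that the chromatic number is an integer $\le\tau(1+\delta)\mu(Q)\le(1+\ve)\rho\mu(Q)$.
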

\begin{proof} Let $s\in S$ be arbitrary, consider $R:=s+Q$, and
let us try to estimate the number of other elements of $S$ falling
in $R$. Clearly for any $C\Subset G$ we have $\#(S\cap
R)/\mu(C+R)\leq \sup_{V\in\BB_0}\#(S\cap V)/\mu(C+V)$ so for any
$\ve>0$ and with some appropriate $C\Subset G$ this is bounded by
$\rho+\ve$ according to the density condition. Note that the
choice of $C$ depends only on $\ve$, but not on $R$. That is, we
already have a bound $k:=\#(S\cap R)\leq (\rho+\ve) \mu(C+R)$ with
the given $C=C(\ve)$, independently of $R$, i.e. of $Q$.

Next we show how to obtain the bound $k\leq [\rho\mu(Q)]+1$ for
some appropriate choice of $Q$. This hinges upon a lemma of Rudin,
stating that for any given compact set $C\Subset G$ and $\ve>0$
there exists another Borel set $V$, also with compact closure, so
that $\mu(C+V)<(1+\ve)\mu(V)$, c.f. 2.6.7 Theorem on page 52 of
\cite{rudin:groups}; moreover, Rudin remarks that this can even be
proved (actually, read out from the proof) with open sets $V$
having compact closure. It is a matter of invariance of Haar
measure with respect to translations to ascertain that (some) of
the interior points of $V$ be 0, so that $V$ is a neighborhood of
0: also, by regularity of the Borel measure, and by compactness of
the closure, we can as well take $V$ to be its own closure.
Furthermore, the same proof also shows that $V$ can even be taken
symmetric. In all, \emph{for an appropriate choice of $V$ for
$Q$}, we even have $k:=\#(S\cap R)\leq (\rho+\ve) \mu(C+R)<
(\rho+\ve)(1+\ve) \mu(Q)$. Note that here the dependence on $C$
disappears from the end formula, but there is a dependence of $Q$
on $\ve$. This is equivalent to the estimate in the Lemma.

It remains to construct the partition once we have a compact
neighborhood $Q$ of 0 and a finite number $k\in \NN$ such that
$\#(S\cap(Q+s)\leq k$ for all $s\in S$. this is standard argument.
Consider a graph on the points of $S$ defined by connecting two
points $s$ and $t$ exactly when $t\in s+Q$. Since $Q$ is
symmetric, this is indeed a good definition for a graph (and not
for a directed graph only).

In this graph by condition the degree of any point of $s\in S$ is
at most $k-1$: there are at most $k-1$ further points of $S$ in
$s+Q$. But it is well-known that such a graph can be partitioned
into $k$ subgraphs with no edges within any of the induced
subgraphs.\footnote{The proof of this is very easy for finite or
countable graphs: just start to put the points, one by one,
inductively into $k$ preassigned sets $S_j$ so that each point is
put in a set where no neighbor of it stays; since each point has
less than $k$ neighbors, this simple greedy algorithm can not be
blocked and the points all find a place. Same for countable many
points, while for larger cardinalities transfinite induction is
needed to carry out the same reasoning.} That is, the set of
points split into the disjoint union of some $S_j$ with no two
points $s,t\in Q$ being in the relation $t\in s+Q$, defining an
edge between them.

It is easy to see that now we constructed the required partition:
the $S_j$ are disjoint, and so are $(S_j-S_j)$ and
$Q\setminus\{0\}$, for any $j=1,\dots,k$, too. This concludes the
proof.
\end{proof}

\begin{lemma}[subadditivity]\label{l:subadditivity} Let
$\nu_0=\sum_{j=1}^n \nu_j$ be a sum of measures, all on the common
set algebra $\SA$ of measurable sets. Then we have
$\overline{D}(\nu_0,\mu) \leq \sum_{j=1}^n
\overline{D}(\nu_j,\mu)$. In particular, this holds for one given
measure $\nu$ and a disjoint union of sets $A_0=\cup_{j=1}^n A_j$,
with $\nu_j:=\nu|_{A_j}$, for $j=0,1,\dots,k$.
\end{lemma}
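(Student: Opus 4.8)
The plan is to unwind the definition \eqref{Cnudensity} of $\overline{D}(\,\cdot\,;\mu)$ as an infimum over compact sets and to exploit monotonicity of $\mu(C+V)$ in $C$: this lets one replace the $n$ separate near-optimal compact sets produced by the $n$ summands by a single compact set that is simultaneously near-optimal for every $\nu_j$.

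First I would dispose of the trivial case: if $\overline{D}(\nu_j;\mu)=+\infty$ for some $j$ the asserted inequality is vacuous, so I may assume each $\overline{D}(\nu_j;\mu)$ is finite. Fix $\ve>0$. For each $j=1,\dots,n$, by the definition of $\overline{D}(\nu_j;\mu)$ as $\inf_{C\Subset G}\sup_{V}\nu_j(V)/\mu(C+V)$, choose $C_j\Subset G$ with
\[
\sup_{V\in\SA\cap\BB_0}\frac{\nu_j(V)}{\mu(C_j+V)}<\overline{D}(\nu_j;\mu)+\ve ;
\]
enlarging $C_j$ by a fixed compact neighborhood of $0$ (which only decreases the supremum) I may also assume $\mu(C_j)>0$. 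Put $C:=\bigcup_{j=1}^n C_j$; this is compact, being a finite union of compact sets, and $\mu(C)>0$, so that $\mu(C+V)>0$ for every nonempty $V\in\BB_0$ and no division by zero occurs below. Since $C+V\supseteq C_j+V$, monotonicity of the Haar measure gives $\mu(C+V)\ge\mu(C_j+V)$, hence for every $j$ and every admissible $V$,
\[
\frac{\nu_j(V)}{\mu(C+V)}\le\frac{\nu_j(V)}{\mu(C_j+V)}<\overline{D}(\nu_j;\mu)+\ve .
\]

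Next I would add these estimates. Since $\nu_0=\sum_{j=1}^n\nu_j$ on the common algebra $\SA$, for every $V\in\SA\cap\BB_0$ we get
\[
\frac{\nu_0(V)}{\mu(C+V)}=\sum_{j=1}^n\frac{\nu_j(V)}{\mu(C+V)}<\sum_{j=1}^n\overline{D}(\nu_j;\mu)+n\ve .
\]
Taking the supremum over $V\in\SA\cap\BB_0$, and then the infimum over compact sets (which the single set $C$ already witnesses from above), yields $\overline{D}(\nu_0;\mu)\le\sum_{j=1}^n\overline{D}(\nu_j;\mu)+n\ve$; letting $\ve\to0$ proves the inequality. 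For the last assertion, if $\nu$ is one measure and $A_0=\bigcup_{j=1}^n A_j$ is a disjoint union then finite additivity gives $\nu|_{A_0}(V)=\nu(A_0\cap V)=\sum_{j}\nu(A_j\cap V)=\sum_{j}\nu|_{A_j}(V)$ for all $V\in\SA$, i.e. $\nu|_{A_0}=\sum_{j}\nu|_{A_j}$, so the general statement applies directly.

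I do not expect a genuine obstacle here; the argument is essentially bookkeeping around the two suprema and infima in the definition. The only points deserving a word of care are the ones flagged above: the degenerate value $\overline{D}(\nu_j;\mu)=\infty$ (trivial), keeping $\mu(C)>0$ so as to avoid a vanishing denominator, and --- crucially --- the finiteness of $n$, which is exactly what makes both ``$\bigcup_{j=1}^n C_j$ is compact'' and ``$n\ve\to0$'' legitimate; for an infinite family of measures no such subadditivity can be expected.
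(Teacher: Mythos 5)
Your proof is correct and follows essentially the same route as the paper's: choose a near-optimal compact set $C_j$ for each $\nu_j$, combine them into a single compact $C$ with $\mu(C+V)\ge\mu(C_j+V)$ for every $j$ and $V$, and split $\nu_0(V)/\mu(C+V)$ termwise. The only immaterial difference is that you combine by the union $\bigcup_{j=1}^n C_j$ where the paper uses the sumset $C_1+\dots+C_n$ (your choice even makes the inclusion $C_j+V\subseteq C+V$ immediate, without the appeal to translation invariance the sumset requires), and you additionally handle the degenerate cases --- some $\overline{D}(\nu_j;\mu)=\infty$ and vanishing denominators --- that the paper passes over silently.
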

\begin{proof} Uniform asymptotic upper density is clearly
monotone in the sets considered, therefore all $S_j$ have a
density $0\leq \rho_j\leq \rho<\infty$ Let $\ve>0$ be arbitrary,
and take $C_j\Subset G$ so that for all $V\in \BB_0$ in the
definition of $\overline{D}(\nu|_A;\mu)$ we have $\nu_j(V)\leq
(\rho_j+\ve)\mu(C_j+V)$. Such $C_j$ exists in view of the infinum
on $C\Subset G$ in the definition of u.a.u.d.

Consider the (still) compact set $C:=C_1+\dots+C_n$. By definition
of u.a.u.d. there is $V\in\BB_0$ such that $\nu(V)\geq (\rho-\ve)
\mu(C+V)$. Obviously, $\mu(C_j+V)\leq \mu(C+V)$, so on combining
these we obtain
$$
\rho-\ve \leq \frac{\nu(V)}{\mu(C+V)} = \frac{\sum_{j=1}^k
\nu_j(V)}{\mu(C+V)}  \leq \sum_{j=1}^k \frac{\nu_j(V)}{\mu(C_j+V)}
\leq \sum_{j=1}^k (\rho_j+\ve),
$$
that is, $\rho-\ve\leq \sum_j (\rho_j+\ve)$ holding for all $\ve$,
we find $\rho\leq \sum_j \rho_j$, as was to be proved.
\end{proof}

{\it Continuation of the proof of Theorem \ref{thm:strongFüsi}}.
We take now an \emph{arbitrary} compact neighborhood $H\Subset G$
of $0$, with of course $\mu(H)>0$, and also $Q:=H-H$ again with
$0<\mu(Q)<\infty$ and $Q$ a symmetric neighborhood of 0. By Lemma
\ref{l:partition} there exists a finite disjoint partition
$S=\cup_{j=1}^n S_j$ with $(S_j-S_j)\cap(H-H)=\{0\}$. By
subadditivity of u.a.u.d. (that is, Lemma \ref{l:subadditivity}
above), at least one of these $S_j$ must have positive u.a.u.d.
$\rho_j$ (with respect to the counting measure), namely of density
$0< \rho/n \leq \rho_j \leq \rho < \infty$, with
$\rho:=\overline{D}(\#|_S,\mu)$.

Selecting such an $S_j$, we can apply Lemma
\ref{l:ifclearthengood} to infer that already $S_j$ -- hence also
$S\supset S_j$ -- is syndetic.
\end{proof}

\noindent {\sc\small
Alfr\' ed R\' enyi Institute of Mathematics, \\
Hungarian Academy of Sciences, \\
1364 Budapest, Hungary}\\
E-mail: {\tt revesz@renyi.hu}

\end{document}